\newtheorem{theorem}{Theorem}[section]
\newtheorem{proposition}[theorem]{Proposition}
\newtheorem{lemma}[theorem]{Lemma}
\newtheorem{claim}[theorem]{Claim}
\newtheorem*{claim*}{Claim}
\newtheorem{corollary}[theorem]{Corollary}
\newtheorem{Main Conjecture}[theorem]{Main Conjecture}
\newtheorem{conjecture}[theorem]{Conjecture}
\newtheorem{problem}[theorem]{Problem}
\theoremstyle{definition}
\newtheorem{definition}{Definition}
\theoremstyle{remark}
\newtheorem{example}[theorem]{Example}
\newcommand{\prt}[1]{\langle #1 \rangle}
\title{Approximate counting of standard set-valued tableaux}
\author{Reuven Hodges}
\author{Gidon Orelowitz}
\address{Dept.~of Mathematics, University of Illinois at Urbana-Champaign, Urbana, IL 61801}
\email{rhodges@illinois.edu, gidono2@illinois.edu}
\date{\today}
\begin{document}
\begin{abstract}
We present a randomized algorithm for generating standard set-valued tableaux by extending the Green-Nijenhuis-Wilf hook walk algorithm. In the case of asymptotically rank two partitions, we use this algorithm to give a fully polynomial almost uniform sampler (FPAUS) for standard set-valued tableaux. This FPAUS is then used to construct a fully polynomial randomized approximation scheme (FPRAS) for counting the number of standard set-valued tableaux for such shapes. We also construct a FPAUS and FPRAS for standard set-valued tableaux when either the size of the partition or the difference between the maximum value and the size of the partition is fixed. Our methods build on the work of Jerrum-Valiant-Vazirani and provide a framework for constructing FPAUS's and FPRAS's for other counting problems in algebraic combinatorics.
\end{abstract}
\maketitle

\section{Introduction}
A \emph{partition} is a weakly decreasing sequence of non-negative integers $\lambda = (\lambda_1 \geq \cdots \geq \lambda_k > 0)$. The \emph{Young diagram} of a partition $\lambda$ is a collection of left justified boxes, with $\lambda_i$ boxes in the $i$th row from the top. The \emph{rank} of a partition is the length of the main diagonal in the Young diagram of the partition.

Let $N\geq|\lambda|$. A \emph{semistandard tableau} of shape $\lambda$ is a an assignment of a single value from $1,\ldots,N$ to each box of $\lambda$, such that it is \emph{column standard} (the values increase in each column from top to bottom) and \emph{row standard} (the values increase weakly in each row from left to right). A \emph{standard tableau} is a semistandard tableau where $N = |\lambda|$ and each value in $1,\ldots,N$ appears exactly once. 

An \emph{$N$-semistandard set-valued tableau} of shape $\lambda$ is an assignment of a nonempty subset of the values from $1,\ldots,N$ to each box of $\lambda$, such that if a single value from each box is selected then the result is column and row standard. An \emph{$N$-standard set-valued tableau} is a $N$-semistandard set-valued tableau such that each value from $1,\ldots,N$ appears exactly once. Let ${\sf SVT}(\lambda, N)$ be the set of $N$-standard set-valued tableau of shape $\lambda$ and set \[ f^{\lambda,N} = |{\sf SVT}(\lambda, N)|. \]

Set-valued tableaux were introduced in \cite{B02} by A.~Buch to study the $K$-theory of Grassmannians. As part of this work, he showed that the symmetric Grothendieck polynomial $\mathfrak{G}_{\lambda}$ has a combinatorial interpretation as the generating function for semistandard set-valued tableau. Subsequently, set-valued tableaux have appeared in the literature on poset edge densities~\cite{RTY18,HLL21}, combinatorial formulas for Lascoux polynomials~\cite{MPS18,BSW20}, and in Brill-Noether theory~\cite{CLPT18,CP21}. In the latter setting, the algebraic Euler characteristic of the Brill-Noether space can be expressed in terms of $f^{\lambda,N}$ for $\lambda$ rectangular. 

As observed by C.~Monical, B.~Pankow, and A.~Yong in \cite[Proposition 4.3]{MPY19}, the computation of $f^{\lambda,N}$ is closely related to counting \emph{Hecke words} of length $N$ whose \emph{Demazure product} is a fixed permutation in the \emph{symmetric group}. They also show that there is no algorithm for computing $f^{\lambda,N}$ that is polynomial-time in the bit length of $|\lambda|$ and $N$. This follows from the fact that the output, $f^{\lambda,N}$, is doubly exponential in the bit length of $|\lambda|$ and $N$. In light of this, they ask instead:

\begin{problem}[{\cite[Problem 1.5]{MPY19}}]Does there exist an algorithm to compute $f^{\lambda,N}$ in time polynomial in $|\lambda|$ and $N$.
\label{prob:main}\end{problem} 
We give an answer to the approximation theoretic version of this question for partitions that are contained within the union of a fixed rectangle with a partition of rank two. Such partitions will be referred to as \emph{asymptotically rank two} since in the limit, as $|\lambda|$ and $N$ grow, the combinatorics of these shapes approximate that of partitions of rank two. Finally, we give an approximation theoretic answer to Problem~\ref{prob:main} for set-valued tableaux where either the size of the partition or the difference between the maximum value and the size of the partition is fixed. 

\subsection{Main Results} Our primary result is a randomized polynomial time algorithm that approximates $f^{\lambda,N}$, when $\lambda$ is asymptotically rank two, to within a factor of $\epsilon \in (0,1]$ with high probability. Explicitly, we give \emph{fully polynomial randomized approximation scheme} (FPRAS) for the number of $N$-standard set-valued tableau for such $\lambda$, which computes an approximation $A$ such that
\[
P((1 - \epsilon)f^{\lambda, N} \leq A \leq (1 + \epsilon)f^{\lambda, N}) \geq 1 - \delta
\]
for any $\epsilon, \delta \in (0,1]$ in time polynomial in $|\lambda|$, $N$, $\frac{1}{\epsilon}$, and $\ln \delta^{-1}$. 

\begin{theorem}
\label{thm:mainFPRASresult}
Fix $\mu = (p^q)$ for some $p,q \in \mathbb{N}$. Let $\lambda$ be a partition such that $\lambda \subseteq \mu \cup \lambda^{\circ}$ where $\lambda^{\circ}$ is a partition of rank two. There is a FPRAS for $f^{\lambda, N}$.
\end{theorem}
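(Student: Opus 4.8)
The plan is to follow the sampling-to-counting paradigm of Jerrum--Valiant--Vazirani: first construct a fully polynomial almost uniform sampler (FPAUS) for ${\sf SVT}(\lambda,N)$ when $\lambda$ is asymptotically rank two, then feed it into the standard self-reducibility machinery to obtain the FPRAS. The underlying self-reduction is obtained by peeling off the largest value. If $T \in {\sf SVT}(\lambda,N)$, then $N$ must occupy a corner (removable) cell $c$ of $\lambda$, since a cell to the right of, or below, $c$ would force a repeated value. Either $T(c)=\{N\}$, in which case deleting $c$ gives an element of ${\sf SVT}(\lambda\setminus c, N-1)$, or $|T(c)|\ge 2$, in which case deleting $N$ from $T(c)$ gives an element of ${\sf SVT}(\lambda, N-1)$ together with a choice of the corner $c$. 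These operations are reversible, so
\[
f^{\lambda,N} \;=\; \sum_{c}\, f^{\lambda\setminus c,\,N-1}\;+\;\bigl(\#\{\text{corners of }\lambda\}\bigr)\cdot f^{\lambda,\,N-1},
\]
the sum over corners $c$ of $\lambda$. Every shape on the right-hand side is again contained in $\mu\cup\lambda^{\circ}$, hence asymptotically rank two, and any such shape has at most a constant number of corners (depending only on $\mu$). Thus ${\sf SVT}$ is self-reducible in the JVV sense, with recursion depth $N$, $O(1)$ branching, and base value $f^{\emptyset,0}=1$.

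For the FPAUS I would extend the Green--Nijenhuis--Wilf hook walk to a reverse-growth procedure: run a hook-walk-type random process on the Young diagram of $\lambda$ that terminates by selecting a pair $(c,m)$, where $c$ is a corner and $m\in\{\text{singleton},\text{non-singleton}\}$, and then recurse on $(\lambda\setminus c, N-1)$, or on $(\lambda, N-1)$ with $N$ placed in $c$, respectively, stopping when $\lambda=\emptyset$. This outputs a random $T\in{\sf SVT}(\lambda,N)$ whose probability $p_{\lambda,N}(T)$ is an explicit, efficiently computable product of transition probabilities. The crucial claim, and the point at which the asymptotically-rank-two hypothesis is used, is the two-sided bound
\[
\frac{1}{\mathrm{poly}(|\lambda|,N)}\;\le\; p_{\lambda,N}(T)\cdot f^{\lambda,N}\;\le\;\mathrm{poly}(|\lambda|,N)\qquad\text{for all }T\in{\sf SVT}(\lambda,N).
\]
Granting this, rejection sampling — output $T$ from the extended walk and accept with probability proportional to $p_{\lambda,N}(T)^{-1}$ — returns an exactly uniform element of ${\sf SVT}(\lambda,N)$ after a polynomial expected number of trials, which in particular is an FPAUS for every shape occurring in the self-reduction.

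I expect the proof of this bias bound to be the main obstacle. The natural route is to decompose the diagram of an asymptotically-rank-two $\lambda$ into its two long rows, its two long columns, and an $O(1)$-sized remainder, and to analyze both the corner-hitting probabilities and the mode probabilities of the extended walk in terms of this decomposition; one then compares with the true quantities $f^{\lambda\setminus c,N-1}/f^{\lambda,N}$ and $f^{\lambda,N-1}/f^{\lambda,N}$, which must themselves be sandwiched between reciprocal polynomials. Establishing such ratio estimates for $f^{\lambda,N}$ over rank-two shapes — bounds of the form $f^{\lambda,N}\le \mathrm{poly}(|\lambda|,N)\cdot f^{\lambda\setminus c,N-1}$ and the reverse inequality — is the heart of the argument, and it is precisely the step that is expected to fail at higher rank, where there are three or more long rows and the walk's behaviour is no longer governed by a bounded-size structure.

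Finally, to pass from the FPAUS to the FPRAS I would apply the standard JVV reduction. At each node $(\lambda',N')$ of the depth-$N$ self-reduction tree, draw $\mathrm{poly}(1/\epsilon,\ln(N/\delta))$ almost-uniform samples from ${\sf SVT}(\lambda',N')$ and estimate the probability that a sample lands in the largest of the $O(1)$ branches determined by the pair $(c,m)$ read off from the cell containing $N'$; this probability is $\Omega(1)$, so the estimate has small relative error except with probability $O(\delta/N)$, and its reciprocal approximates $f^{\lambda',N'}$ divided by the count at the corresponding child. Recursing into that child, telescoping along the path to the base case, multiplying the reciprocal estimates by $f^{\emptyset,0}=1$, and combining a Chernoff bound with a union bound over the $N$ levels yields an estimate $A$ with $P\bigl((1-\epsilon)f^{\lambda,N}\le A\le(1+\epsilon)f^{\lambda,N}\bigr)\ge 1-\delta$ in time polynomial in $|\lambda|$, $N$, $1/\epsilon$, and $\ln\delta^{-1}$, which is the desired FPRAS.
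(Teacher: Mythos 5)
Your overall architecture is the same as the paper's (a hook-walk-based generator, turned into an almost uniform sampler, fed into the Jerrum--Valiant--Vazirani sampling-to-counting reduction), and your self-reduction differs only cosmetically: you peel off the value $N$ from a corner and recurse on $(\lambda\setminus c,N-1)$ or $(\lambda,N-1)$, whereas the paper keeps $\lambda$ fixed and filters by pre-tableaux $S_k\supseteq S_{k-1}\supseteq\cdots$, placing one value at a time and estimating the telescoping ratios $|SV_{m-1}|/|SV_m|$ by taking the most-populated branch among the samples. Your recursion is legitimate (the count identity $f^{\lambda,N}=\sum_c f^{\lambda\setminus c,N-1}+\#\{\text{corners}\}\cdot f^{\lambda,N-1}$ is correct, the family is closed under these moves, and the corner count is $O(1)$ for $\lambda\subseteq\mu\cup\lambda^{\circ}$), so the counting layer of your argument would go through \emph{given} the sampler.

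The genuine gap is that the sampler is never actually constructed or analyzed, and the step you defer --- the two-sided bound $1/\mathrm{poly}\le p_{\lambda,N}(T)\cdot f^{\lambda,N}\le\mathrm{poly}$ --- is precisely the mathematical content of the theorem and the only place the asymptotically-rank-two hypothesis is used. In the paper this is done in three concrete pieces: an exact product formula for the probability that the generator outputs a fixed $T$ (Proposition~\ref{prop:probbounds}); tight bounds of the form $\bigl(f^{\lambda}\binom{N-1}{|\lambda|-1}{\sf sv}(\lambda)^{N-|\lambda|}\bigr)^{-1}\le\mathbb{P}_{\sf SVG}(T,N)\le\bigl(f^{\lambda}\binom{N-1}{|\lambda|-1}\bigr)^{-1}$, where ${\sf sv}(\lambda)$ is the size of the Sylvester triangle; and, crucially, a combinatorial lower bound on $|{\sf SVT}(\lambda,N,S)|$ obtained by explicitly constructing roughly $d(\lambda,\mu)\binom{k-O(1)}{|\lambda\setminus S|-O(1)}{\sf sv}^{\,k-|\lambda\setminus S|}$ distinct tableaux, routing the surplus values into the two long rows, the two long columns, and the boundary cells of the Sylvester triangle (Theorem~\ref{theorem:main}, Cases 1--3). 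Your proposal asserts the analogous estimate, labels it ``the main obstacle,'' and sketches only the decomposition one would start from; nothing in the proposal certifies it, so the FPAUS (and hence the FPRAS) is not established. A secondary point: your rejection-sampling step needs an \emph{efficiently computable} lower bound on $\min_T p_{\lambda,N}(T)$ that is within a polynomial factor of the truth (you cannot normalize by $f^{\lambda,N}$, which is the unknown); the paper's explicit hook-length/Sylvester bound supplies exactly such a quantity, and without a counterpart your acceptance probability is not even well defined. So the proposal is a correct plan with the heart of the proof missing, not a proof.
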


As a special case we have:

\begin{corollary}
\label{cor:mainFPRASresultspecial}
Let $\lambda$ be a partition of rank less than three. There is a FPRAS for $f^{\lambda, N}$.
\end{corollary}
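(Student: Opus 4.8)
The plan is to obtain Corollary~\ref{cor:mainFPRASresultspecial} as an immediate consequence of Theorem~\ref{thm:mainFPRASresult}; the only work is to check that every partition of rank less than three falls under the hypothesis of that theorem. Recall that the rank of $\lambda$ is the largest $i$ with $\lambda_i \ge i$, so "rank less than three" means $\lambda$ has rank $r \in \{0,1,2\}$, equivalently $\lambda_3 \le 2$. I want to exhibit a \emph{fixed} rectangle $\mu = (p^q)$ and a partition $\lambda^{\circ}$ of rank two with $\lambda \subseteq \mu \cup \lambda^{\circ}$; note that Theorem~\ref{thm:mainFPRASresult} is then applied to $\lambda$ itself, so it yields a FPRAS for $f^{\lambda,N}$ (and not merely for $f^{\lambda^{\circ},N}$), which is exactly the desired conclusion.

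For the rectangle I would simply take $\mu = (1^1)$, a single box, so that $p = q = 1$ irrespective of whether $\mathbb{N}$ is taken to include $0$. For $\lambda^{\circ}$ I would set $\lambda^{\circ}_i = \max(\lambda_i, 2)$ for $i \in \{1,2\}$ and $\lambda^{\circ}_i = \lambda_i$ for $i \ge 3$. Since $\lambda$ has rank at most two we have $\lambda_3 \le 2 \le \lambda^{\circ}_2$, so the sequence $\lambda^{\circ}$ is weakly decreasing and hence a genuine partition; moreover $\lambda^{\circ}_2 \ge 2$ while $\lambda^{\circ}_3 = \lambda_3 \le 2 < 3$, so $\lambda^{\circ}$ has rank exactly two. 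By construction $\lambda_i \le \lambda^{\circ}_i$ for every $i$, that is $\lambda \subseteq \lambda^{\circ} \subseteq \mu \cup \lambda^{\circ}$ (in fact $\mu \cup \lambda^{\circ} = \lambda^{\circ}$ here, since $\lambda^{\circ}_1 \ge 2 \ge 1$). Thus the hypotheses of Theorem~\ref{thm:mainFPRASresult} are satisfied, and it provides a FPRAS for $f^{\lambda,N}$.

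I do not expect a genuine obstacle in this argument: the content is entirely contained in Theorem~\ref{thm:mainFPRASresult}, and the reduction is a one-line observation. The only points requiring a little care are bookkeeping in the degenerate cases — the empty partition, single-row shapes, columns, and hooks — all of which are covered simultaneously by the uniform prescription $\lambda^{\circ}_i = \max(\lambda_i,2)$ for $i \le 2$, together with the (trivially satisfied) requirement that the rectangle $\mu$ be chosen independently of $\lambda$. For the truly degenerate shapes one could alternatively just note that $f^{\lambda,N}$ is computed exactly in polynomial time, but routing them through the embedding above keeps the proof uniform.
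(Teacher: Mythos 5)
Your proposal is correct and matches the paper, which simply presents this corollary as an immediate special case of Theorem~\ref{thm:mainFPRASresult} (choosing a fixed rectangle and noting that a rank~$<3$ partition is contained in $\mu\cup\lambda^{\circ}$ for a suitable rank-two $\lambda^{\circ}$). Your explicit choice $\mu=(1^1)$ and $\lambda^{\circ}_i=\max(\lambda_i,2)$ for $i\le 2$ is a clean, uniform way to supply the bookkeeping the paper leaves implicit.
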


We also give an FPRAS when some of the input parameters are fixed.

\begin{theorem}
\label{thm:mainFPRASresultfixed}
If $|\lambda|$ or $N-|\lambda|$ is fixed, then there is a FPRAS for $f^{\lambda, N}$.
\end{theorem}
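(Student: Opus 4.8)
The plan is to base both cases on an exact identity expressing $f^{\lambda,N}$ as a sum over ordinary standard tableaux of shape $\lambda$. Given $T\in{\sf SVT}(\lambda,N)$, for $t\in\{1,\dots,N\}$ let $\beta_t$ be the box of $T$ containing $t$, and set $\Lambda_0=\emptyset$ and $\Lambda_t=\{\beta_1,\dots,\beta_t\}$. A short check from the defining inequalities shows that each $\Lambda_t$ is a Young subdiagram of $\lambda$ with $\Lambda_{t-1}\subseteq\Lambda_t$ and $|\Lambda_t\setminus\Lambda_{t-1}|\le 1$, that exactly $r:=N-|\lambda|$ of the steps are ``pauses'' ($\Lambda_t=\Lambda_{t-1}$), and that at a pause the box $\beta_t$ must be a \emph{removable corner} of $\Lambda_{t-1}$, any such corner being legitimate. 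Recording the box added at each non-pause step yields a standard tableau $C$ of shape $\lambda$, and recording, for the $r$ pauses in order of increasing $t$, the level $|\Lambda_t|$ and the corner used, reconstructs $T$; the pause levels form a weakly increasing sequence and, for fixed $C$ and fixed levels, the corner at each level may be chosen independently. This produces a bijection giving
\[
f^{\lambda,N}=\sum_{C\in{\sf SYT}(\lambda)}h_r\bigl(c(\nu^C_1),c(\nu^C_2),\dots,c(\nu^C_{|\lambda|})\bigr),
\]
where $\nu^C_j\subseteq\lambda$ is the shape of the cells of $C$ with entry $\le j$, $c(\nu)$ is the number of removable corners of $\nu$, and $h_r$ is the complete homogeneous symmetric polynomial of degree $r$; abbreviate the argument list on the right as $\mathbf{c}(C)$. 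Establishing this bijection carefully — in particular that ``removable corner of the current subdiagram'' is exactly the condition preserving validity and that nothing further couples successive pauses sharing a level — is the step I expect to demand the most care; the remainder is bookkeeping.

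Suppose first that $|\lambda|$ is fixed. Then $|{\sf SYT}(\lambda)|=f^{\lambda}\le|\lambda|!$ is a constant, each $C$ has bounded size, and every entry of $\mathbf{c}(C)$ is a bounded integer, so the only unbounded quantity in the identity is $r=N-|\lambda|$. Since a complete homogeneous symmetric polynomial in a bounded number of bounded arguments is computable exactly in time polynomial in its degree — for instance as the coefficient of $z^{r}$ in $\prod_i(1-x_i z)^{-1}$, via the finite-order linear recurrence satisfied by the $h_k$ — the identity computes $f^{\lambda,N}$ exactly in time polynomial in $N$ by summing the constantly many terms, which in particular is an FPRAS. (Alternatively, a fixed $\lambda$ lies in a fixed rectangle, so this case already follows from Theorem~\ref{thm:mainFPRASresult}.)

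Now suppose $r=N-|\lambda|$ is fixed. Then $h_r$ is a fixed-degree symmetric polynomial in $|\lambda|\le N$ nonnegative-integer arguments, each at most $N$, so for every $C$,
\[
1\;\le\;h_r\bigl(\mathbf{c}(C)\bigr)\;\le\;\binom{|\lambda|+r-1}{r}\,N^{r}\;=:\;B,
\]
with $B$ polynomial in $N$ (the lower bound holds since $\nu^C_1=(1)$ always contributes the monomial $1^{r}$). Rewriting the identity as $f^{\lambda,N}=f^{\lambda}\cdot\mathbb{E}_C\bigl[h_r(\mathbf{c}(C))\bigr]$ with $C$ uniform in ${\sf SYT}(\lambda)$, the plan is to estimate the expectation by Monte Carlo: sample uniformly random standard tableaux $C$ of shape $\lambda$ in polynomial time (e.g.\ by the hook walk algorithm), evaluate the polynomial-time-computable statistic $h_r(\mathbf{c}(C))$ on each, and average. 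Since the statistic lies in $[1,B]$ its relative variance is at most $B^{2}=\mathrm{poly}(N)$, so $O(B^{2}\epsilon^{-2})$ samples yield a $(1\pm\epsilon)$-accurate estimate with probability at least $2/3$, and the median of $O(\ln\delta^{-1})$ independent such estimates achieves confidence $1-\delta$; multiplying by $f^{\lambda}$ computed exactly from the hook length formula gives the FPRAS. (Equivalently, the same bound $B$ turns the identity into a polynomial-time exact sampler for ${\sf SVT}(\lambda,N)$ — draw $C$ uniformly, accept with probability $h_r(\mathbf{c}(C))/B$, then choose the pause data uniformly — which feeds into the Jerrum-Valiant-Vazirani framework exactly as in the proof of Theorem~\ref{thm:mainFPRASresult}.) In this regime the only real work is the identity above and the elementary bound $B$.
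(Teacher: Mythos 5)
Your proof is correct, but it takes a genuinely different route from the paper. The paper derives this theorem from its general sampling machinery: Theorem~\ref{thm:mainfixed} shows that the Metropolized chain $\mathcal{MC}_{\sf SVT}(\lambda,N,S)$ is rapidly mixing whenever $|\lambda\setminus S|$ or $k-|\lambda\setminus S|$ is $O(1)$ (using the ${\sf SVGen}$ probability bounds of Proposition~\ref{prop:probbounds} and the conductance criterion of Corollary~\ref{cor:conduct2}), which gives a FPAUS, and Theorem~\ref{thm:downwardstabfrpas} then converts the FPAUS into a FPRAS for the refined counts $f^{\lambda,N,S}$ by the telescoping self-reducibility argument. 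You instead prove the exact identity $f^{\lambda,N}=\sum_{C\in{\sf SYT}(\lambda)}h_r\bigl(\mathbf{c}(C)\bigr)$ with $r=N-|\lambda|$, and your bijection is sound: a non-minimal entry $t$ of a box must occupy a removable corner of $\Lambda_{t-1}$ because $\max(T(r,c))<\min(T(r+1,c))$ and $\max(T(r,c))<\min(T(r,c+1))$, and corner choices among pauses at a fixed level are indeed unconstrained since the subshape does not change between additions. For fixed $|\lambda|$ this yields a deterministic exact polynomial-time algorithm (stronger than an FPRAS), and your fallback through a fixed rectangle and Theorem~\ref{thm:mainFPRASresult} is also valid; for fixed $r$ it yields an unbiased Monte Carlo estimator over uniform standard tableaux (samplable by the hook walk of \cite{GNW79}) whose value lies in $[1,B]$ with $B={\sf poly}(N)$, so polynomially many samples suffice --- note that \cite[Proposition 4.5]{MPY19}, cited in the introduction, even computes this case exactly. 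What the paper's heavier route buys is uniformity and generality: it approximates $f^{\lambda,N,S}$ for arbitrary pre-tableaux $S$, produces an almost-uniform sampler as a byproduct, and the same framework covers the asymptotically rank two shapes of Theorem~\ref{thm:mainFPRASresult}, where your estimator's relative variance is no longer polynomially controlled once $r$ grows. One minor caution on your parenthetical rejection sampler: ``choose the pause data uniformly'' must mean uniformly among the $h_r(\mathbf{c}(C))$ admissible level--corner sequences (so the level sequence is weighted by corner counts, e.g.\ via a simple dynamic program), not independent uniform choices; your main argument does not rely on this aside.
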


A polynomial time algorithm for computing $f^{\lambda, N}$ exactly for any $\lambda$ with fixed $N-|\lambda|$  is given in \cite[Proposition 4.5]{MPY19}. The authors are not aware of a polynomial time algorithm for computing $f^{\lambda, N}$ exactly for any $N$ with fixed $|\lambda|$. In~\cite{D18}, P.~Drube gives exact formulas, in the case of two row shapes, for the number of $N$-standard set-valued tableaux with fixed \emph{density}, that is, when each box contains a fixed number of entries.

\subsection{Sampling and counting combinatorial objects} These results are achieved by first constructing a randomized algorithm, Algorithm~\ref{svgen} (${\sf SVGen})$, for generating a $N$-standard set-valued tableau. Then ${\sf SVGen}$ is used to bootstrap the Markov chain $\mathcal{MC}_{\sf SVT}$, yielding a \emph{fully polynomial almost uniform sampler} (FPAUS) for $N$-standard set-valued tableau for asymptotically rank two partitions. A FPAUS on a set $\mathcal{S}$ is an algorithm that takes as input a bias parameter $\delta$ and outputs a random $T \in \mathcal{S}$ from a distribution $\Gamma$ on $\mathcal{S}$ such that 
\[
d_{TV}(\Gamma, U) \leq \delta,
\]
where $d_{TV}$ is the \emph{total variance distance} and $U$ is the uniform distribution on $\mathcal{S}$, in time polynomial in the problem size and $\log \delta^{-1}$. We construct a FPAUS for a large class of standard set-valued tableau in in Theorem~\ref{theorem:main}, and by setting $k=N$ in this theorem we recover a FPAUS for asymptotically rank two $\lambda$.

For problems in $\#P$, the existence of a FPAUS in \emph{self-reducible} problems is computationally equivalent to the existence of a FPRAS~\cite{JVV86}. This groundbreaking result, and a later generalization by M.~Dyer and C.~Greenhill~\cite{DG99}, have been used, especially in combination with Markov chain methods, to give FPRAS for many important problems in $\#P$. One of the most successful applications of this methodology is giving a FPRAS for computing the permanent of an arbitrary $n \times n$ matrix with non-negative entries~\cite{JSV04}.

The definition of self-reducibility is technical, and depends strongly on the encoding used for problem instances, and so we will avoid introducing it. The fundamental idea at the core of self-reducibility is that the problem may be expressed as a polynomially bounded (in the problem size) number of sub-problems, each of which are simpler versions of that same problem. We will refer to such a problem as \emph{essentially self-reducible}. We reformulate the computation of $f^{\lambda, N}$ so that it is essentially self-reducible, and then employ the ideas of~\cite{JVV86} to directly construct a FPRAS in Theorem 6.1 which culminates in a proof of Theorem~\ref{thm:mainFPRASresult} and Theorem~\ref{thm:mainFPRASresultfixed}.

It is our belief that our methods provide a useful framework for tackling a multitude of open sampling and counting problems in algebraic combinatorics. Given a randomized polynomial-time algorithm that generates all elements of a set of combinatorial objects with \emph{any} distribution, Corollary 5.2 provides a criterion for converting the algorithm into a FPAUS for that set. Then, so long as the associated counting problem is essentially self-reducible, the FPAUS may be converted into a FPRAS. This essentially self-reducible condition is not particularly restrictive, and many problems, especially those that involve counting fillings of Young diagrams, may be reformulated to become essentially self-reducible. For example, counting semistandard tableaux of a fixed shape and content, i.e. computing \emph{Kostka coefficients}, can be reformulated so that it is essentially self-reducible.

Our paper is organized as follows: Section 2 recalls useful definitions and notations in tableau combinatorics and complexity theory. In Section 3, we define the algorithms that form the building blocks of our FPAUS. In Section 4, we analyze the probability that our algorithm returns a fixed standard set-valued tableau, as well as prove tight bounds on the minimum and maximum probabilities. In Section 5, we convert the algorithm into a FPAUS, and prove that it runs in polynomial time in certain cases.  In Section 6 we show that this FPAUS generates a FPRAS.

\section{Background and notation}
In this section we introduce the definitions and notation that will be used throughout.
\subsection{Set-valued tableaux notation and background} Let ${\sf SSYT}(\lambda, N)$ be the set of semistandard tableau with values in $1,\ldots,N$ and ${\sf SYT}(\lambda)$ be the set of standard tableau of shape $\lambda$.  When $N = |\lambda|$, ${\sf SVT}(\lambda, N) \cong {\sf SYT}(\lambda)$.

For $\mu\subseteq \lambda$, the \emph{skew partition} $\lambda\setminus \mu$ consists of all boxes that are in $\lambda$, but not in $\mu$.  The Young diagram of $\lambda\setminus \mu$ is similarly the Young diagram of $\lambda$, with any boxes in the Young diagram of $\mu$ removed.  ${\sf SYT}(\lambda\setminus \mu)$ is the set of all standard tableau of shape $\lambda\setminus \mu$, which is defined identically to the case of partitions.  

If there is a box in row $r$ and column $c$ of the Young diagram of $\lambda$ we write $(r,c) \in \lambda$. For $T \in {\sf SSYT}(\lambda, N)$ and $(r,c) \in \lambda$, $T(r,c)$ is the value assigned to that box of $T$. For $T \in {\sf SVT}(\lambda, N)$, $T(r,c)$ is the subset of $\{ 1,\ldots,N \}$ assigned to that box of $T$.

For $0 \leq k \leq N$, a \emph{$N\prt{k}$-standard set-valued pre-tableau} of shape $\lambda$ is an assignment of a subset of the values from $k+1,\ldots,N$ to each box of $\lambda$, such that
\begin{itemize}
\item[(i)] each value from $k+1,\ldots,N$ appears exactly once;
\item[(ii)] if $T(r,c) \neq \emptyset$, then $(r+1,c) \in \lambda$ implies $T(r+1,c) \neq \emptyset$ and $(r,c+1) \in \lambda$ implies $T(r,c+1) \neq \emptyset$;
\item[(iii)] if $T(r,c) \neq \emptyset$, then $(r+1,c) \in \lambda$ implies $\max(T(r,c)) < \min(T(r+1,c))$ and $(r,c+1) \in \lambda$ implies $\max(T(r,c)) < \min(T(r,c+1))$;
\item[(iv)] $|\{ (r,c) \in \lambda : T(r,c) = \emptyset\}| \leq k$.
\end{itemize}
Given a $N\prt{k}$-standard set-valued pre-tableau $S$, let ${\sf SVT}(\lambda, N, S) \subseteq {\sf SVT}(\lambda, N)$ be the subset of $N$-standard set-valued tableau $T$ of shape $\lambda$, such that $S \subseteq T$ (that is, $S(r,c) \subseteq T(r,c)$ for $(r,c) \in \lambda$). If the choice of $N$ for the argument is obvious (for example, if $S$ is non-empty), then it is omitted.  It is routine to verify that ${\sf SVT}(\lambda, N, S) > 0$ for any such $S$. We denote the unique $N\prt{N}$-standard set-valued pre-tableau by $E_{\lambda, N}$, with $E_{\lambda, N}(r,c)=\emptyset$ for $(r,c) \in \lambda$. Then ${\sf SVT}(\lambda, N) = {\sf SVT}(\lambda, N, E_{\lambda, N})$. Any $N\prt{0}$-standard set-valued pre-tableau is itself a $N$-standard set-valued tableau.

Let $f^{\lambda} = |{\sf SYT}(\lambda)|$, $f^{\lambda,N} = |{\sf SVT}(\lambda, N)|$, $f^{\lambda,N,S} = |{\sf SVT}(\lambda, N, S)|$, and $f^{\lambda\setminus \mu} = |{\sf SYT}(\lambda\setminus \mu)|$. Let $\mathcal{SVT}$ be the set of all $(\lambda, N, S)$ such that $\lambda$ is a partition, $|\lambda| \leq N \in \mathbb{N}$, and $S$ a $N\prt{k}$-standard set-valued pre-tableau of shape $\lambda$ for some $0 \leq k \leq N$.

\begin{example}
A semistandard tableau for $N=6$, a standard tableau, a $9\prt{5}$-standard set-valued tableau, and $9$-standard set-valued pre-tableau of shape $(3,2)$ are listed below, as well as a standard tableau of shape $(3,2)\setminus (2)$.  The brackets in the set notation are omitted for clarity.
\begin{center}
\ytableausetup{boxsize=1.5em}
\begin{ytableau}
2 & 3 & 3  \\
4 & 6 \\
\end{ytableau}
\qquad \qquad 
\begin{ytableau}
1 & 2 & 4  \\
3 & 5 \\
\end{ytableau}
\qquad \qquad
\begin{ytableau}
\, & \, & 6  \\
\, & {\scriptstyle 7,8,9} \\
\end{ytableau}
\qquad \qquad
\begin{ytableau}
{\scriptstyle 1,2} & 5 & 6  \\
{\scriptstyle 3,4} & {\scriptstyle 7,8,9} \\
\end{ytableau}
\qquad
\begin{ytableau}
\none  & \none  & 2  \\
1 & 3 \\
\end{ytableau}
\end{center}
\end{example}

\subsection{Complexity Theory}
Given two functions $f,g:\mathbb{N}^n_{>0}\to \mathbb{R}_{>0}$, $f=O(g)$ if there exist $c,M_1,\dots M_n>0$ such that $f(x_1,\dots, x_n) \leq cg(x_1,\dots x_n)$ whenever $x_i\geq M_i$ for all $i$.  Say that $f = \Theta(g)$ if $f = O(g)$ and $g = O(f)$.  Define $f = {\sf poly}(x_1,\dots, x_n)$ if there exists some polynomial $h\in \mathbb{R}[x_1,\dots, x_n]$ such that $f = O(h)$.

\section{The generation algorithm}
\label{sec:genAlg}

Fix a partition $\lambda$ and $N \geq |\lambda|$.  In this section we introduce the algorithm SVGen which generates a random $N\prt{k}$-standard set-valued pre-tableau. It will not generate these standard set-valued tableau uniformly at random, but will be the foundation for the FPAUS.
\begin{definition}
The hook of a box $(r,c) \in \lambda$ is \[h_{\lambda}(r,c) = \{ (p,q) \in \lambda : p = r\textit{ with } q > c\textit{, or }q=c\textit{ with } p > r\}.\]
\end{definition}

\begin{definition}
An $(r,c) \in \lambda$ is a \emph{lower right box} if $h_{\lambda}(r,c)=\emptyset$. 
\end{definition}

For $A\subset \mathbb{N}^2$, let $NW(A) = \{(r,c)\in A: \{(r,c)\} = A\cap ([r]\times [c])\}$. Let ${\sf randEl}(A)$ be a function which returns an element of the set $A$, uniformly at random.

\begin{algorithm}[H]
  \caption{Use the hook walk algorithm to return a lower right box of $\lambda$}\label{hookwalk}
  \begin{algorithmic}[1]
    \Require{$\lambda$ is a partition}
    \Statex
    \Procedure{Hook}{$\lambda$}\Comment{Returns a lower right box of $\lambda$}
    \State $(r,c) \gets {\sf randEl}(\{ (p,q) \in \lambda \})$

    \While{$h_{\lambda}(r,c) \neq \emptyset$}
      \State $(r,c) \gets {\sf randEl}(h_{\lambda}(r,c))$
    \EndWhile\label{}
    \State \textbf{return} $(r,c)$\Comment{(r,c) is a lower right box}
    \EndProcedure
  \end{algorithmic}
\end{algorithm}

\begin{algorithm}[H]
  \caption{Generate a random $N$-standard set-valued tableau of shape $\lambda$}\label{svgen}
  \begin{algorithmic}[1]
    \Require{$\lambda$ is a partition, $N \geq |\lambda|$ is a positive integer, $k \leq N$ a nonnegative integer, $T$ a $N\prt{k}$-standard set-valued pre-tableau such that ${\sf SVT}(\lambda, N, T) > 0$}
    \Statex
    \Procedure{SVGen}{$\lambda$, $N$, $k$, $T$}
    \Comment{\textit{Returns an element of ${\sf SVT}(\lambda, N, T)$}}
    \If{k = N}
    \State $(r,c) \gets Hook(\lambda)$
    \State $T(r,c) \gets \{ N \}$
    \State $k \gets 1$
    \EndIf

    \For{$M \gets k \textrm{ to } 1$}
      \State $\lambda' \gets \{ (r,c) \in \lambda : T(r,c) = \emptyset \}$
      \If{${\sf rand}(0,1) \leq (M - |\lambda'|) / M $} \label{svgen:line:mainif}
        \State $(r,c) \gets {\sf randEl}(NW(\{ (p,q) \in \lambda : T(p,q) \neq \emptyset \} ))$
        \State $T(r,c) \gets T(r,c) \cup \{ M \}$

        \Else
        \State $(r,c) \gets Hook(\lambda')$
        \State $T(r,c) \gets \{ M \}$
      \EndIf

    \EndFor

    \State \textbf{return} $T$\Comment{\textit{T a $N$-standard set-valued tableau of shape $\lambda$}}
    \EndProcedure
  \end{algorithmic}
\end{algorithm}


\begin{proposition}
{\sf SVGen}($\lambda$, $N$, $k$, $T$) returns an element of ${\sf SVT}(\lambda, N, T)$.
\end{proposition}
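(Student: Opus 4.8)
I would prove the statement by a loop invariant for the main loop of ${\sf SVGen}$. Write $S$ for the pre-tableau passed as the argument $T$, and let $k_0$ be the value of $k$ after the opening conditional block, so $k_0 = k$ if $k < N$ and $k_0 = N - 1$ if $k = N$ (hence $k_0 \le N - 1$ always). First note that ${\sf SVGen}$ never removes a value from a box: every update is $T(r,c) \gets T(r,c) \cup \{M\}$, or the assignment of $\{M\}$ (respectively $\{N\}$) to a box that was empty; so $S \subseteq T$ holds at every point, in particular for the returned tableau. The invariant I would maintain is: \emph{the body of the loop, executed with loop variable $M$, carries an $N\prt{M}$-standard set-valued pre-tableau of shape $\lambda$ to an $N\prt{M-1}$-standard set-valued pre-tableau.} Since the loop runs over $M = k_0, k_0 - 1, \ldots, 1$ and (as checked below) the state entering the loop is an $N\prt{k_0}$-standard set-valued pre-tableau, the state after the loop is an $N\prt{0}$-standard set-valued pre-tableau; by the remark following the definition of pre-tableaux this is precisely an $N$-standard set-valued tableau of shape $\lambda$, and with $S \subseteq T$ this gives $T \in {\sf SVT}(\lambda, N, S)$, the assertion.

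For the entry condition: if $k < N$, the precondition on the input says exactly that $T$ is an $N\prt{k} = N\prt{k_0}$-standard set-valued pre-tableau. If $k = N$, then $S = E_{\lambda, N}$, and the conditional block puts $\{N\}$ in a lower right box $(r,c)$ of $\lambda$; since $h_{\lambda}(r,c) = \emptyset$ neither $(r+1,c)$ nor $(r,c+1)$ lies in $\lambda$, and every box other than $(r,c)$ is empty, so (i)--(iv) for an $N\prt{N-1}$-standard set-valued pre-tableau hold at once. As groundwork for the inductive step I would also record one structural fact: by condition (ii), the set $\lambda' = \{(r,c) \in \lambda : T(r,c) = \emptyset\}$ of empty boxes is a down-set of $\lambda$ in the componentwise order, hence is itself the Young diagram of a partition, so ${\sf Hook}(\lambda')$ is meaningful; and ${\sf Hook}(\lambda')$ is called only when the conditional test fails, which (as ${\sf rand}(0,1) > 0$) forces $|\lambda'| > 0$, while ${\sf randEl}$ of $NW$ of the nonempty boxes is called only when some box is nonempty, which holds because the value $N > M$ has already been placed. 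So every subroutine call runs on a legitimate input, and moreover ${\sf Hook}$ itself terminates, since each iteration of its while loop moves to a box with strictly larger row-plus-column coordinate and so halts at a lower right box.

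The inductive step is the heart of the argument. Fix $M$ and suppose $T$ is an $N\prt{M}$-standard set-valued pre-tableau at the top of the corresponding iteration. If the first branch is taken, $M$ is adjoined to a box $(r,c)$ in $NW$ of the nonempty boxes, so no nonempty box other than $(r,c)$ lies weakly north and west of $(r,c)$; the set of empty boxes is unchanged, so (ii) persists, (i) holds since $M$ did not occur before, and (iv) holds since this branch is entered only when $(M - |\lambda'|)/M > 0$, i.e.\ $|\lambda'| \le M - 1$. For (iii): $\max T(r,c)$ is unchanged because the previous entries of $(r,c)$ exceed $M$, so the inequalities from $(r,c)$ into the boxes below and to its right persist; and the boxes $(r-1,c)$ and $(r,c-1)$, when in $\lambda$, are empty, so the inequalities from them into $(r,c)$ are vacuous --- which is exactly what is needed, since $\min T(r,c)$ now equals $M$. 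If the second branch is taken, $M$ is placed in the box $(r,c)$ returned by ${\sf Hook}(\lambda')$, so $(r,c)$ was empty and is a lower right box of $\lambda'$; then $(r+1,c)$ and $(r,c+1)$, when in $\lambda$, lie outside $\lambda'$ and hence are nonempty, which establishes (ii) and also yields the inequalities from $(r,c)$ downward and rightward in (iii) (their entries exceed $M = \max T(r,c)$), while $(r-1,c)$ and $(r,c-1)$, when in $\lambda$, were empty (since $(r,c)$ was empty and (ii) held), so the inequalities from them into $(r,c)$ are again vacuous; finally (i) holds and (iv) improves, as $|\lambda'|$ decreases by one. This completes the inductive step.

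The step I expect to be the main obstacle --- indeed the only part not reducible to routine bookkeeping --- is the verification of condition (iii): the value $M$ just inserted is smaller than every entry already present, so it can only decrease $\min T(r,c)$ and hence threatens the strict inequality with whatever sits immediately north or west of $(r,c)$. The resolution, as above, is that the two selection rules of ${\sf SVGen}$ --- choosing a box of $NW$ of the nonempty region in the first branch, and a lower right box of the empty region $\lambda'$ in the second --- are designed precisely so that those two neighbors are empty, making the dangerous constraint vacuous; the remaining conditions, and the nonemptiness needed for the subroutine calls, follow from the same invariant.
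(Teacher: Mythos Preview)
Your proof is correct and is precisely the routine loop-invariant verification that the paper gestures at with its one-line ``This follows from a straight-forward analysis of {\sf SVGen}''; the paper gives no further detail, and your argument supplies exactly what that analysis would be. One minor remark: your parenthetical ``(as ${\sf rand}(0,1) > 0$)'' when arguing that the else-branch forces $|\lambda'| > 0$ is a bit garbled --- the relevant point is that ${\sf rand}(0,1) \le 1$, so the test ${\sf rand}(0,1) \le (M - |\lambda'|)/M$ cannot fail when $|\lambda'| = 0$ --- but the conclusion is right.
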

\begin{proof}
This follows from a straight-forward analysis of {\sf SVGen}.
\end{proof}

\section{Probability Analysis}

For $N = |\lambda|$, ${\sf SVGen}(\lambda, N, 0, E_{\lambda, N})$ is precisely the Green-Nijenhuis-Wilf hook walk algorithm for generating standard tableaux of shape $\lambda$. The hook walk algorithm generates the standard tableaux in ${\sf SYT}(\lambda)$ uniformly at random. In the case $N > |\lambda|$ and $\lambda$ has more than one row and column, ${\sf SVGen}(\lambda, N, 0, E_{\lambda, N})$ no longer generates ${\sf SSYT}(\lambda, N)$ uniformly at random. Nonetheless, we are able to analyze the probability that a given $T \in {\sf SVT}(\lambda, N)$ is generated.


Let $\delta^n = (n,n-1,\dots, 1)$ be the \emph{staircase} of height $n$.  Define ${\sf sv}(\lambda) := \max\{k:\delta^k\subseteq \lambda\}$, and $\delta^{{\sf sv}(\lambda)}$ is known as the \emph{Sylvester triangle} of $\lambda$.  For $T\in {\sf SVT}(\lambda, N)$, define ${\sf cell}_T(k)$ to be the unique $(r,c)$ such that $k\in T(r,c)$, and $T^{\prt{k}}$ to be the unique $N\prt{k}$-standard set-valued pre-tableau such that $T\in {\sf SVT}(\lambda, N,T^{\prt{k}})$. 

Let $\mathbb{P}_{\sf SVG}(T,k):= \mathbb{P}(T = {\sf SVGen}(\lambda,N,k,T^{\prt{k}}))$.  Similarly, for $S$ a $N\prt{k}$-standard set-valued pre-tableau of shape $\lambda$ define $\lambda\setminus S := \{(r,c)\in \lambda: S(r,c)=\emptyset\}$, and $(\lambda\setminus S)^+ := (\lambda\setminus S)\cup NW(\{(r,c)\in \lambda: |S(r,c)|>0\})$. Given a fixed $T\in {\sf SVT}(\lambda, N)$, for simplicity of notation we denote $\lambda^{\prt{k}} = \lambda \setminus T^{\prt{k}}$.

\begin{proposition}
\label{prop:probbounds}
Fix a $T\in {\sf SVT}(\lambda, N)$. Then
\begin{equation}
\mathbb{P}_{\sf SVG}(T,N) = \frac{1}{f^\lambda\binom{N-1}{|\lambda|-1}} \prod_{k\not= \max(T({\sf cell}_T(k)))} |{\text NW}(\{(r,c)\in \lambda: k \leq \max(T(r,c)) \})| ^{-1}.
\end{equation}
For $0 \leq k < N$,
\begin{equation}
\mathbb{P}_{\sf SVG}(T,k) = \frac{1}{f^{\lambda^{\prt{k}}}\binom{k}{|\lambda^{\prt{k}}|}} \prod_{\substack{1\leq i \leq k \\ i\not= \max(T({\sf cell}_T(i)))}} |{\text NW}(\{(r,c)\in \lambda: i \leq \max(T(r,c)) \})| ^{-1}
\end{equation}
\end{proposition}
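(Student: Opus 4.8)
The plan is to analyze $\textsf{SVGen}(\lambda, N, k, T^{\langle k\rangle})$ step by step, tracking the probability that each successive choice agrees with the target tableau $T$, and then to recognize the accumulated product as the claimed formula. I would treat the case $k = N$ first, since this reduces to the case $0 \le k < N$ after the initial $\textsf{Hook}(\lambda)$ call. Indeed, when $k = N$ the algorithm first places $N$ in a lower-right box $(r,c)$ via $\textsf{Hook}(\lambda)$; the Greene--Nijenhuis--Wilf analysis says this puts $N$ in the correct cell $\textsf{cell}_T(N)$ with probability exactly $1/\bigl(f^\lambda / f^{\lambda \setminus \{\textsf{cell}_T(N)\}}\bigr)$, or more precisely the hook-walk probability of landing at a fixed corner. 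After this step the algorithm continues from the state $k = 1$ with the value $N$ placed, and the remainder is exactly an execution analyzable by the $0 \le k < N$ formula applied to the pre-tableau with only $N$ filled in. So the bulk of the work is the second formula, and the first will follow by combining it with the known corner-probability identity for the hook walk, using $|\lambda^{\langle 1 \rangle}| = |\lambda| - 1$ and the identity $f^\lambda = \sum_{\text{corners}} f^{\lambda \setminus \{\text{corner}\}}$ together with $\binom{N-1}{|\lambda|-1} = \binom{1}{1}\binom{N-1}{|\lambda|-1}$ bookkeeping.

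For the second formula, fix $T \in \textsf{SVT}(\lambda, N)$ and run $\textsf{SVGen}(\lambda, N, k, T^{\langle k\rangle})$. At the start of the iteration for a given value $M$ (running from $k$ down to $1$), the partially built tableau must equal $T^{\langle M \rangle}$ in order to have any chance of producing $T$; condition on this. Let $\lambda' = \lambda^{\langle M\rangle} = \lambda \setminus T^{\langle M\rangle}$ be the current set of empty cells, of size $|\lambda^{\langle M\rangle}|$. There are two cases according to where $M$ sits in $T$. If $M$ is \emph{not} the maximum of its cell in $T$ — equivalently, in $T^{\langle M-1\rangle}$ the cell $\textsf{cell}_T(M)$ is already nonempty, so the algorithm must take the ``add to an existing box'' branch — the probability of taking that branch is $(M - |\lambda'|)/M$, and then $\textsf{cell}_T(M)$ must be the chosen element of $NW(\{(p,q) : T^{\langle M-1\rangle}(p,q) \ne \emptyset\})$; one checks that $NW$ of the filled cells of $T^{\langle M-1\rangle}$ is exactly $NW(\{(r,c) : M \le \max(T(r,c))\})$ shifted appropriately — more carefully, it equals $NW$ of the set of cells whose max in $T$ is $\ge M$ — giving factor $(M-|\lambda'|)/(M \cdot |NW(\{(r,c): M \le \max T(r,c)\})|)$. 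If instead $M$ \emph{is} the maximum of its cell, the algorithm must take the ``new box'' branch with probability $|\lambda'|/M$, and then $\textsf{Hook}(\lambda')$ must return $\textsf{cell}_T(M)$, which is a lower-right box of $\lambda'$; here the key point is that the hook-walk probability telescopes so that, after multiplying over all such $M$, the product of these branch-and-hook probabilities collapses to $1/\bigl(f^{\lambda^{\langle k\rangle}} \prod (\text{intermediate } |\lambda'|\text{-type factors})\bigr)$ — in fact the cleanest route is to multiply all the $M$-factors together and observe that the $M$ in the denominators combine with the $(M - |\lambda'|)$ and $|\lambda'|$ numerators to produce exactly $\binom{k}{|\lambda^{\langle k\rangle}|}^{-1}$ as a product of ratios, while the hook-walk factors for the ``new box'' steps assemble into $1/f^{\lambda^{\langle k\rangle}}$ by iterating the GNW result, and the ``existing box'' steps contribute precisely the $|NW(\cdots)|^{-1}$ factors in the stated product.

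The main obstacle, and the step deserving the most care, is the combinatorial bookkeeping that turns the telescoping product of the hook-walk ``new box'' probabilities into the clean factor $1/f^{\lambda^{\langle k\rangle}}$ and simultaneously extracts $\binom{k}{|\lambda^{\langle k\rangle}|}^{-1}$ from the branch probabilities $(M - |\lambda'|)/M$ and $|\lambda'|/M$. Concretely, as $M$ decreases from $k$ to $1$, the quantity $|\lambda'|$ decreases by $1$ exactly on the ``new box'' steps (of which there are $|\lambda^{\langle k\rangle}|$ total) and stays fixed on ``existing box'' steps; writing out the product $\prod_M (\text{branch prob})$ and matching the decreasing sequences of numerators against $\{1, \dots, k\}$ in the denominator is a standard but delicate identity, essentially a lattice-path / ballot-type count, that yields the binomial coefficient. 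For the hook-walk part I would invoke the GNW identity in the form: the hook walk on a shape $\nu$ lands at a fixed corner $c$ with probability $f^{\nu \setminus c}/f^{\nu}$, apply it once per ``new box'' step with $\nu$ the current empty-cell shape, and telescope $\prod (f^{\nu_{i+1}}/f^{\nu_i}) = f^{\emptyset}/f^{\lambda^{\langle k\rangle}} = 1/f^{\lambda^{\langle k\rangle}}$, being careful that the empty-cell shapes really do form a chain of partitions $\lambda^{\langle k\rangle} = \nu_0 \supset \nu_1 \supset \cdots$ each obtained by removing a corner — which holds because conditions (ii)--(iii) in the definition of pre-tableau force the filled region to be a Young-diagram-complement, so its complement within $\lambda$ is itself a straight shape and each step removes a lower-right (hence corner) box.
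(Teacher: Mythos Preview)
Your proposal is correct and follows essentially the same approach as the paper: both arguments separate the values $M\in[k]$ into those that are the maximum of their cell (``new box'' steps) versus those that are not (``existing box'' steps), invoke the Greene--Nijenhuis--Wilf corner probability $f^{\nu\setminus c}/f^{\nu}$ to telescope the hook-walk factors down to $1/f^{\lambda^{\langle k\rangle}}$, and observe that the branch probabilities $|\lambda'|/M$ and $(M-|\lambda'|)/M$ multiply to $|\lambda^{\langle k\rangle}|!\,(k-|\lambda^{\langle k\rangle}|)!/k! = \binom{k}{|\lambda^{\langle k\rangle}|}^{-1}$, with the $|NW(\cdots)|^{-1}$ factors coming from the uniform choice on existing-box steps; the $k=N$ case is handled identically by peeling off the initial $\textsf{Hook}(\lambda)$ call. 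Your explicit remark that the empty-cell region $\lambda'$ is always a straight shape (so that the GNW result applies) is a point the paper leaves implicit, and your phrase ``continues from the state $k=1$'' should read $k=N-1$ (there appears to be a typo in the displayed algorithm), but you clearly have the correct picture.
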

\begin{proof}
We begin by proving a claim regarding hook-insertion.
\begin{claim}
\label{claim:hookprob}
For any corner $(r,c)$ of $\lambda$, $\mathbb{P}((r,c) = {\sf Hook}(\lambda)) = f^{\lambda \setminus \{(r,c)\}}/f^\lambda$.
\end{claim}
\begin{proof}
In their proof of the hook-length formula, Greene-Nijenhuis-Wilf~\cite{GNW79} detail a probabilistic method for generating standard Young tableaux uniformly at random. ${\sf SVGen}(\lambda, N, 0, E_{\lambda, N})$ is identical to their method when $N = |\lambda|$, and thus generates elements of ${\sf SYT}(\lambda)$ uniformly at random with probability $1/f^{\lambda}$.  

If the first corner selected by ${\sf Hook}(\lambda)$ in ${\sf SVGen}(\lambda, |\lambda|, 0, E_{\lambda, |\lambda|})$ is $(r,c)$, then the remaining entries of the filling are recursively determined by ${\sf SVGen}(\lambda, |\lambda|-1, 1, X)$, where $X$ is equal to $E_{\lambda, |\lambda|}$ with the value $|\lambda|$ placed in $(r,c)$. This is equivalent probabilistically to filling the remaining entries via ${\sf SVGen}(\lambda \setminus \{(r,c)\},|\lambda|-1, 0, E_{\lambda \setminus \{(r,c)\}, |\lambda|-1})$. Thus, each output will appear with probability $1/f^{\lambda \setminus \{(r,c)\}}$.  Therefore, $1/f^\lambda = \mathbb{P}((r,c) = {\sf Hook}(\lambda))(1/f^{\lambda \setminus \{(r,c)\}})$, which implies the claim.
\end{proof}

Now we consider the probability that the algorithm inserts the largest values of each cell in a way that would generate $T$. Suppose that 
\[
\{\max(T(r,c)):(r,c)\in \lambda\} = \{ j_1 < j_2 < \cdots < j_{|\lambda|} \}.
\]
For each $m\in [|\lambda|]$, $\lambda^{\prt{j_m}}$ is a partition with $|\lambda^{\prt{j_m}}|=m$. Setting $j_0 = 0$, we have $\lambda^{\prt{j_{m-1}}}\subset \lambda^{{\prt{j_m}}}$ with $\lambda^{{\prt{j_0}}} = \emptyset$ and $\lambda^{\prt{j_{|\lambda|}}} = \lambda$.


For each $m\in [|\lambda^{\prt{k}}|]$, consider the probability that ${\sf SVgen}(\lambda,N,k,T^{\prt{k}})$ places $j_m$ in ${\sf cell}_T(j_m)$.  If {\sf SVgen} line~\ref{svgen:line:mainif} evaluates to true, then $j_m$ will be placed in a cell with a larger value. Thus {\sf SVgen} line~\ref{svgen:line:mainif} must evaluate to false, which occurs with probability $m / j_m$.  Then, by Claim \ref{claim:hookprob}, the probability that $j_m$ is placed in ${\sf cell}_T(j_m)$ by ${\sf Hook}$ is $f^{\lambda^{\prt{j_{m-1}}}}/f^{\lambda^{\prt{j_m}}}$.  In total, the probability that $j_m$ is inserted into ${\sf cell}_T(j_m)$ is
\begin{equation}
\label{eq:svgenprob1}
\frac{m f^{\lambda^{\prt{j_{m-1}}}}}{j_m f^{\lambda^{\prt{j_m}}}}.
\end{equation}
if $m < |\lambda|$, and $\frac{f^{\lambda^{\prt{j_{|\lambda|-1}}}}}{f^{\lambda^{\prt{j_{|\lambda|}}}}} = \frac{f^{\lambda^{\prt{j_{|\lambda|}-1}}}}{f^{\lambda^{\prt{N}}}} = \frac{f^{\lambda^{\prt{N-1}}}}{f^{\lambda}}$ otherwise.

Next, we consider the probability that the algorithm inserts the non-largest values of each cell in a way that would generate $T$. Suppose
\[
[N]\setminus \{\max(T(r,c)):(r,c)\in \lambda\} = \{l_1 < l_2 < \cdots < l_{N-|\lambda|} \}.
\]
For each $m\in [k - |\lambda^{\prt{k}}|]$, consider the probability that ${\sf SVgen}(\lambda,N,k,T^{\prt{k}})$ places $l_m$ in ${\sf cell}_T(l_m)$. If {\sf SVgen} line~\ref{svgen:line:mainif} evaluates to false, then $l_m$ will be the largest value in the cell in which it is placed. Thus {\sf SVgen} line~\ref{svgen:line:mainif} must evaluate to true, which occurs with probability $m / l_m$. Then, $l_m$ is inserted, uniformly at random, into a cell in 
\[
NW(\{(r,c)\in \lambda:l_m \leq \max(T(r,c))\}).
\]
Combining, the probability that $l_m$ is inserted into ${\sf cell}_T(j_m)$ is 
\begin{equation}
\label{eq:svgenprob2}
\frac{m}{l_m}|NW(\{(r,c)\in \lambda:l_m\leq \max(T(r,c))\})|^{-1}.
\end{equation}


If $0 \leq k < N$, then $\{j_m : m \in [|\lambda^{\prt{k}}|]\}$ and $\{l_m : m\in [k-|\lambda^{\prt{k}}|]\}$ partition $[k]$.  This, in combination with \eqref{eq:svgenprob1} and \eqref{eq:svgenprob2}, yields
\begin{align*}
\mathbb{P}_{\sf SVG}(T,k) =&\  \left(\prod_{m=1}^{|\lambda^{\prt{k}}|} \frac{mf^{\lambda^{\prt{j_{m-1}}}}}{j_mf^{\lambda^{\prt{j_m}}}}\right)\left(\prod_{m=1}^{k - |\lambda^{\prt{k}}|} \frac{m}{l_m}|NW(\{(r,c)\in \lambda:l_m\leq \max(T(r,c))\})|^{-1}\right)\\
=&\ \frac{|\lambda^{\prt{k}}|!(k-|\lambda^{\prt{k}}|)!}{k!}\left(\prod_{m=1}^{|\lambda^{\prt{k}}|} \frac{f^{\lambda^{\prt{j_{m-1}}}}}{f^{\lambda^{\prt{j_m}}}}\right) \cdot \\
\, & \qquad \qquad \qquad \qquad \quad \left(\prod_{m=1}^{k-|\lambda^{\prt{k}}|}|NW(\{(r,c)\in \lambda:l_m\leq \max(T(r,c))\})|^{-1}\right) \\
= &\ \frac{1}{f^{\lambda^{\prt{j_{|\lambda^{\prt{k}}|}}}}\binom{k}{|\lambda^{\prt{k}}|}} \prod_{\substack{1\leq i \leq k \\ i \neq \max(T({\sf cell}_T(i)))}} |{\text NW}(\{(r,c)\in \lambda: i \leq \max(T(r,c)) \})| ^{-1}\\
= &\ \frac{1}{f^{\lambda^{\prt{k}}}\binom{k}{|\lambda^{\prt{k}}|}} \prod_{\substack{1\leq i \leq k \\ i \neq \max(T({\sf cell}_T(i)))}} |{\text NW}(\{(r,c)\in \lambda: i \leq \max(T(r,c)) \})|^{-1}.
\end{align*}

On the other hand, if $k=N$, then $\mathbb{P}_{\sf SVG}(T,k)$ is the probability that $N$ gets inserted into ${\sf cell}_T(N)$ of $\lambda$, times the probability that all of the other values are inserted in a way that would generate $T$. So 
\begin{align*}
\mathbb{P}_{\sf SVG}(T,N) =&\ \mathbb{P}({\sf Hook}(\lambda) = {\sf cell}_T(N)) \cdot \mathbb{P}_{\sf SVG}(T,N-1)\\
=&\ \frac{f^{\lambda^{\prt{N-1}}}}{f^{\lambda}}\frac{1}{f^{\lambda^{\prt{N-1}}}\binom{N-1}{|\lambda^{\prt{N-1}}|}} \prod_{\substack{1\leq i \leq N-1 \\ i\neq \max(T({\sf cell}_T(i)))}} |{\text NW}(\{(r,c)\in \lambda: i \leq \max(T(r,c)) \})| ^{-1}\\
=&\ \frac{1}{f^{\lambda}\binom{N-1}{|\lambda|-1}} \prod_{\substack{1\leq i \leq N-1 \\ i \neq \max(T({\sf cell}_T(i)))}} |{\text NW}(\{(r,c)\in \lambda: i \leq \max(T(r,c)) \})|^{-1},
\end{align*}
which completes the proof.
\end{proof}
\begin{corollary}
Fix a $T\in {\sf SVT}(\lambda, N)$. Then
\begin{equation}
\label{eq:0bounds}
\frac{1}{f^\lambda \binom{N-1}{|\lambda|-1}({\sf sv}(\lambda))^{N-|\lambda|}}\leq P_{\sf SVG}(T,N)\leq \frac{1}{f^\lambda \binom{N-1}{|\lambda|-1}}
\end{equation}
and for all $0 \leq k < N$, 
\begin{equation}
\label{eq:kbounds}
\frac{1}{f^{\lambda^{\prt{k}}}\binom{k}{|\lambda^{\prt{k}}|}({\sf sv}((\lambda^{\prt{k}})^+))^{k-|\lambda^{\prt{k}}|}}\leq P_{\sf SVG}(T,k)\leq \frac{1}{f^{\lambda^{\prt{k}}}\binom{k}{|\lambda^{\prt{k}}|}}.
\end{equation}
These bounds are tight for any $N,\lambda,k$.
\end{corollary}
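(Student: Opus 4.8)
\noindent The plan is to reduce both bounds to one estimate on the factors appearing in Proposition~\ref{prop:probbounds}: for every $i$ with $1 \le i \le N$,
\[
1 \;\le\; |NW(\{(r,c) \in \lambda : i \le \max(T(r,c))\})| \;\le\; {\sf sv}(\lambda),
\]
and, when $i \le k$, the upper bound sharpens to ${\sf sv}((\lambda^{\prt{k}})^+)$. Granting this, \eqref{eq:0bounds} is immediate: the product in the formula for $\mathbb{P}_{\sf SVG}(T,N)$ has exactly one factor for each of the $N - |\lambda|$ values that is not the maximum of its own cell, so it lies between ${\sf sv}(\lambda)^{-(N-|\lambda|)}$ and $1$, and multiplying by $\frac{1}{f^\lambda\binom{N-1}{|\lambda|-1}}$ gives the claim. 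Display \eqref{eq:kbounds} follows the same way, the product now having $k - |\lambda^{\prt{k}}|$ factors and ${\sf sv}(\lambda)$ replaced throughout by ${\sf sv}((\lambda^{\prt{k}})^+)$.

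For the estimate, fix $i$ and set $A = \{(r,c)\in\lambda : i \le \max(T(r,c))\}$. Since $\max(T({\sf cell}_T(N))) = N \ge i$, the set $A$ is nonempty, hence so is $NW(A)$; this is the lower bound. For the upper bound, the key point is that $NW(A)$ is an antichain of $\mathbb{N}^2$ in the componentwise order: if two of its cells were comparable, the smaller would lie weakly northwest of the larger and preclude the latter from being $NW$-minimal. Listing the cells of $NW(A)$ with strictly increasing rows $r_1 < \cdots < r_t$ therefore forces strictly decreasing columns $c_1 > \cdots > c_t$, so $r_j \ge j$ and $c_j \ge t - j + 1$ for each $j$. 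As $(r_j, c_j) \in \lambda$, this gives $\lambda_j \ge \lambda_{r_j} \ge c_j \ge t - j + 1$ for $1 \le j \le t$, i.e.\ $\delta^t \subseteq \lambda$, so $t = |NW(A)| \le {\sf sv}(\lambda)$. When $i \le k$ one further notes that $NW(A) \subseteq (\lambda^{\prt{k}})^+$: a cell of $NW(A)$ with $\max(T(r,c)) \le k$ has $T^{\prt{k}}(r,c) = \emptyset$, so lies in $\lambda \setminus T^{\prt{k}} = \lambda^{\prt{k}}$, while a cell with $\max(T(r,c)) > k$ lies in $B := \{(r,c)\in\lambda : T^{\prt{k}}(r,c) \neq \emptyset\}$, and being $NW$-minimal in the larger set $A \supseteq B$ it is $NW$-minimal in $B$, i.e.\ lies in $NW(B) \subseteq (\lambda^{\prt{k}})^+$. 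Since $(\lambda^{\prt{k}})^+$ is itself a partition (it is $\lambda^{\prt{k}}$ with a subset of its addable corners adjoined), ${\sf sv}((\lambda^{\prt{k}})^+)$ is defined, and re-running the antichain argument with $\lambda$ replaced by $(\lambda^{\prt{k}})^+$ yields $|NW(A)| \le {\sf sv}((\lambda^{\prt{k}})^+)$.

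For the tightness claims one exhibits extremal $T$. Equality in the upper bounds holds as soon as every relevant set $A$ is all of $\lambda$, making every factor equal to $1$; for $k = N$ take $T$ with $T(1,1) = \{1, 2, \ldots, N - |\lambda| + 1\}$ and any standard filling of $\lambda\setminus\{(1,1)\}$ by $N - |\lambda| + 2, \ldots, N$. For the lower bounds, write $s = {\sf sv}(\lambda)$, fill the Sylvester triangle $\delta^{s-1}$ with $1, \ldots, \binom{s}{2}$ as a standard Young tableau of singletons, place $\{\binom{s}{2}+1, \ldots, \binom{s}{2} + N - |\lambda| + 1\}$ into one addable corner of $\delta^{s-1}$ (necessarily a cell of $\lambda$ since $\delta^s \subseteq \lambda$), and fill the remaining cells as singletons by a standard filling; then every value $i$ that is not a cell maximum has $A = \lambda \setminus \delta^{s-1}$, whose $NW$-set is precisely the $s$ antidiagonal cells $(j, s+1-j)$ of $\delta^s$ (and cannot be larger, by the bound above), so every factor equals ${\sf sv}(\lambda)^{-1}$. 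The truncated bounds are made tight by the analogous fillings applied inside the subproblem determined by $T^{\prt{k}}$.

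The step I expect to be the main obstacle is the upper estimate: recognizing that an $NW$-set is an antichain and hence encodes a staircase inside $\lambda$, and, in the truncated case, the extra bookkeeping needed to localize the argument inside $(\lambda^{\prt{k}})^+$ — in particular checking that $(\lambda^{\prt{k}})^+$ is a genuine partition and that it contains $NW(A)$. The tightness constructions are routine but require mild care to verify that the resulting fillings are legitimate standard set-valued tableaux.
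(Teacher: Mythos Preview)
Your argument is correct and follows the same overall architecture as the paper: bound each factor $|NW(\{(r,c):i\le\max T(r,c)\})|$ between $1$ and the relevant Sylvester parameter, count how many factors appear, and then exhibit extremal $T$ for tightness. The one substantive difference is in how you prove the antichain bound (the paper's Claim~\ref{claim:indepbound}). The paper argues by exclusion: since $\delta^{{\sf sv}(\lambda)+1}\not\subseteq\lambda$, some antidiagonal cell $(t,{\sf sv}(\lambda)+2-t)$ is missing, which forces $\lambda$ into the union of a horizontal strip $\{r<t\}$ and a vertical strip $\{c<{\sf sv}(\lambda)+2-t\}$, each admitting only a bounded antichain. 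You instead argue constructively: listing the antichain as $(r_1,c_1),\ldots,(r_t,c_t)$ with rows increasing and columns decreasing gives $\lambda_j\ge\lambda_{r_j}\ge c_j\ge t-j+1$, so $\delta^t\subseteq\lambda$ directly. Your route is slightly cleaner and avoids the case split, while the paper's version makes the obstruction (the missing corner) explicit; both yield the same inequality with the same scope.

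Two minor remarks. First, your justification that $(\lambda^{\prt{k}})^+$ is a partition is correct but worth spelling out once, since the paper takes it for granted when invoking ${\sf sv}((\lambda^{\prt{k}})^+)$. Second, your tightness sketch for the truncated case (``analogous fillings inside the subproblem'') is morally right but compresses real work: the paper's construction has to simultaneously fix $T^{\prt{k}}$ and the filling below $k$ so that conditions (i)--(iii) hold, and in particular must place the multi-valued cell at $({\sf sv}((\lambda^{\prt{k}})^+),1)$ rather than at an arbitrary addable corner of $\delta^{s-1}$. If you intend to claim tightness for all $k$, that cell location matters and should be specified.
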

\begin{proof}
To prove the upper bounds in \eqref{eq:0bounds} and \eqref{eq:kbounds}, observe that there exists a cell whose maximum label is $N$. Thus, $\{(r,c)\in \lambda: i \leq \max(T(r,c)) \}\not= \emptyset$ for $1 \leq i \leq N$. This implies $|{\text NW}(\{(r,c)\in \lambda: i \leq \max(T(r,c)) \})|\geq 1$ for each $i$, and the inequality follows.

To see that this upper bound is tight, consider the unique $T\in {\sf SVT}(\lambda, N)$ such that $|T(r,c)|=1$ for all $(r,c)\not=(1,1)$.  For this $T$, $i \neq \max(T({\sf cell}_T(k)))$ implies $i\in [N-|\lambda|]$. For each $i\in [N-|\lambda|]$, $\{(r,c)\in \lambda: i \leq \max(T(r,c)) \} = \{(1,1)\}$, and hence $|{\text NW}(\{(r,c)\in \lambda: i \leq \max(T(r,c)) \})|= 1$. This yields the desired equality for the upper bound.

\begin{claim}
\label{claim:indepbound}
If $A\subset \mathbb{N}^2$ such that $A\subseteq \lambda$ and no element of $A$ is weakly northwest of any other element, then $|A|\leq {\sf sv}(\lambda)$.
\end{claim}

\begin{proof}
By the definition of ${\sf sv}(\lambda)$, there exists $t\in [{\sf sv}(\lambda)+1]$ such that $(t,{\sf sv}(\lambda)+2-t)\not\in \lambda$.  Fix such a $t$. Then, $\lambda\subset \{(r,c):r<t\}\cup \{(r,c):c<{\sf sv}(\lambda) + 2-t\}$.  By hypothesis, $|A\cap \{(r,c):r<t\}|\leq t-1$, and $|A\cap \{(r,c):c<{\sf sv}(\lambda) + 2-t\}| \leq {\sf sv}(\lambda)+1-t$. Hence $|A| = |A\cap \lambda| \leq |A\cap \{(r,c):r<t\}|+ |A\cap \{(r,c):c<{\sf sv}(\lambda) + 2-t\}| \leq {\sf sv}(\lambda)$.
\end{proof}

To prove the lower bounds in \eqref{eq:0bounds} and \eqref{eq:kbounds}, it is sufficient to show that, for all $k\geq 0$ and $i\in [k]$ such that $i \neq \max(T({\sf cell}_T(i)))$, 
\[
{\sf sv}((\lambda^{\prt{k}})^+)\geq |NW(\{(r,c)\in \lambda: i \leq \max(T(r,c)) \})|.  
\]
Observe that $NW(\{(r,c)\in \lambda: i \leq \max(T(r,c)) \} \subset (\lambda^{\prt{k}})^+$ and no element of $NW(\{(r,c)\in \lambda: i \leq \max(T(r,c)) \}$ is weakly northwest of any other element. Thus, Claim \ref{claim:indepbound} gives us our lower bound.

To show the lower bounds in \eqref{eq:0bounds} and \eqref{eq:kbounds} are tight, let $k\geq 0$, $T \in {\sf SVT}(\lambda, N)$ such that
\begin{itemize}
\item[(i)] $|T(r,c)|=1$ for all $(r,c)\in \lambda^{\prt{k}}\setminus \{({\sf sv}((\lambda^{\prt{k}})^+),1)\}$,
\item[(ii)] ${\sf cell}_T(i)\in \delta^{{\sf sv}((\lambda^{\prt{k}})^+)-1}$ for all $i\in |\delta^{{\sf sv}((\lambda^{\prt{k}})^+)-1}|$,
\item[(iii)] $T({\sf sv}((\lambda^{\prt{k}})^+),1) \supseteq [k-|\lambda^{\prt{k}}|+\binom{{\sf sv}((\lambda^{\prt{k}})^+)}{2}+|\{({\sf sv}((\lambda^{\prt{k}})^+),1)\}\cap \lambda^{\prt{k}}|]\setminus [|\delta^{{\sf sv}((\lambda^{\prt{k}})^+)-1}|]$.
\end{itemize} 
Such a $T$ can be constructed by starting with any $N\prt{k}$-standard pre-tableau of shape $\lambda$. The remaining values are placed, first satisfying (ii), by placing a single value in each cell in $\delta^{{\sf sv}((\lambda^{\prt{k}})^+)-1}$ in any way that does not violate standardness. Then values are placed in cell $({\sf sv}((\lambda^{\prt{k}})^+),1)$ satisfying (iii). Then the remaining empty cells of $\lambda^{\prt{k}}$ are filled with a single value in any way that does not violate standardness.

For such a $T$, $i\not= \max(T({\sf cell}_T(i)))$ and $i\in [k]$ implies that 
\[
|\delta^{{\sf sv}((\lambda^{\prt{k}})^+)-1}| < i \leq k-|\lambda^{\prt{k}}|+|\delta^{{\sf sv}((\lambda^{\prt{k}})^+)-1}|,
\]
and
\begin{equation*}
\{(r,c)\in \lambda: i \leq \max(T(r,c)) \} = \lambda\setminus \delta^{{\sf sv}((\lambda^{\prt{k}})^+)-1} .
\end{equation*}
Thus
\begin{equation*}
NW(\{(r,c)\in \lambda: i \leq \max(T(r,c)) \})= \{(t,{\sf sv}((\lambda^{\prt{k}})^+)-t+1):t\in [{\sf sv}((\lambda^{\prt{k}})^+)]\},
\end{equation*}
which implies
\begin{equation*}
|NW(\{(r,c)\in \lambda: i \leq \max(T(r,c)) \})|= {\sf sv}((\lambda^{\prt{k}})^+),
\end{equation*}
resulting in equality for the lower bound.  
\end{proof}

\section{A FPAUS via Markov chains}
\label{sec:markovAlg}

Our goal in this section is to convert the ${\sf SVGen}$ algorithm presented in Section~\ref{sec:genAlg} into a FPAUS for $N$-standard set-valued tableau. This can be achieved in a number of ways, but the language of Markov chains is particularly convenient for our purposes.

\subsection{Background on Markov chains}
In this section we introduce the required background on Markov chains, following the material and notation of \cite{LP17}.

Let $\mathcal{M}$ be a \emph{Markov chain} with \emph{state space} $\mathcal{X}$ and \emph{transition matrix} $P$. For $x,y \in \mathcal{X}$, $P(x,y)$ is the probability of proceeding from $x$ to $y$, while $P^{t}(x,y)$ is the probability of proceeding from $x$ to $y$ in exactly $t$ steps. $\mathcal{M}$ is \emph{irreducible} if for every $x,y \in \mathcal{X}$, there exists a $t \geq 0$ such that  $P^t(x,y) > 0$, that is, it is possible to eventually transition between every pair of states. The \emph{period} of $x \in \mathcal{X}$ is the greatest common divisor of $\{ t\geq 1 : P^t(x,x) > 0 \}$. A Markov chain is \emph{aperiodic} if all states in $\mathcal{X}$ have period $1$. 

Given two distributions $\mu$ and $\nu$ on $\mathcal{X}$, the \emph{total variance distance} is \[d_{\sf TV}(\mu,\nu) = \max_{A \subseteq \mathcal{X}} |\mu(A) - \nu(A)|.\] A fundamental result in the theory of Markov chains states that any irreducible and aperiodic Markov chain has a unique stationary distribution $\pi$ over $\mathcal{X}$. Explicitly, for all $x,y \in \mathcal{X}$ 
\[ 
\lim_{t \to \infty} P^t(x,y) = \pi(y),
\]
and hence for any $\epsilon > 0$, there exists a $t$ such that  $d_{\sf TV}(P^t(x,\cdot),\pi) \leq \epsilon$. 
A Markov chain is \emph{reversible} if there exists a distribution $\pi$ on $\mathcal{X}$ such that for all $x,y \in \mathcal{X}$,
\[
\pi(x)P(x,y) = \pi(y)P(y,x).
\]
If the chain is reversible, then $\pi$ is the steady state distribution.

Let $d(t) = \max_{x \in \mathcal{X}} d_{\sf TV}(P^t(x,\cdot),\pi)$. The \emph{mixing time} of a chain is \[t_{\sf mix}(\epsilon) = \min\{ t : d(t) \leq \epsilon \}.\] A chain is \emph{rapidly mixing} if $t_{\sf mix}(\epsilon)$ is bounded by a polynomial in $\epsilon^{-1}$ and $n$, where $n$ is a parameter measuring the problem size.

\subsection{The Metropolis algorithm} The Metropolis algorithm takes as input an irreducible Markov chain with state space $\mathcal{X}$ and arbitrary transition matrix $\Psi$, and modifies the chain so that it has stationary distribution $\pi$. 

A new transition matrix is generated as follows. When at a state $x$, use $\Psi(x,\cdot)$ to generate a new state $y$. The chain moves to $y$ with probability $\min\{1, \frac{\pi(y)\Psi(y,x)}{\pi(x)\Psi(x,y)} \}$, and remains at $x$ otherwise. The new transition matrix is
\[
P(x,y) = \begin{cases}\Psi(x,y)\min\{1, \frac{\pi(y)\Psi(y,x)}{\pi(x)\Psi(x,y)} \} & y \neq x \\ 1 - \displaystyle \sum_{\substack{z \in \mathcal{X} \\ x \neq z}} \Psi(x,z)\min\{1, \frac{\pi(z)\Psi(z,x)}{\pi(x)\Psi(x,z)} \} & y = x \\ \end{cases}
\]
It is an easy exercise to verify that this chain is reversible with steady state distribution $\pi$.

\subsection{Bounding mixing time via conductance} Let $P$ be the transition matrix for an irreducible and aperiodic Markov chain with stationary distribution $\pi$. The \emph{conductance} of a Markov chain is a measure of the connectedness of the state space. Formally, the conductance (also known as the \emph{Cheeger constant}) of the chain is
\[ 
\Phi = \displaystyle \min_{\substack{S \subseteq \mathcal{X} \\ \pi(S) \leq \frac{1}{2}}} \frac{\displaystyle \sum_{x \in S, y \in S^c}\pi(x)P(x,y)}{\pi(S)}.
\]
Conductance, combined with results relating mixing time and the eigenvalues of $P$, yields bounds on the mixing time of the chain~\cite{S93},
\[
\frac{1-\Phi}{2\Phi}\ln \frac{1}{\epsilon} \leq t_{\sf mix}(\epsilon) \leq \frac{1}{\Phi^2} \left( \ln \frac{1}{\displaystyle \min_{x \in \mathcal{X}}\pi(x)} + \ln \frac{1}{\epsilon} \right).
\]
An immediate corollary is that conductance completely characterizes rapid mixing.
\begin{corollary}
\label{cor:conductance}
A family of irreducible and aperiodic Markov chains $\mathcal{M}_n$ of problem size $n$ and conductance $\Phi_n$ is rapidly mixing if and only if 
\[
\Phi_n \geq \frac{1}{f(n)}
\]
for some polynomial $f(n)$.
\end{corollary}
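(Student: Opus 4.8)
The statement is essentially a direct consequence of the two-sided mixing time bound
\[
\frac{1-\Phi}{2\Phi}\ln\frac{1}{\epsilon}\ \leq\ t_{\sf mix}(\epsilon)\ \leq\ \frac{1}{\Phi^{2}}\left(\ln\frac{1}{\displaystyle\min_{x\in\mathcal{X}}\pi(x)}+\ln\frac{1}{\epsilon}\right)
\]
quoted just above, so the plan is to unpack each direction of the ``if and only if'' by plugging $\Phi_n$ into the appropriate inequality and checking polynomiality in $n$ and $\epsilon^{-1}$.

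For the ``if'' direction, suppose $\Phi_n\geq 1/f(n)$ for a polynomial $f$. The right-hand bound gives $t_{\sf mix}(\epsilon)\leq f(n)^{2}\bigl(\ln(1/\min_x\pi(x))+\ln(1/\epsilon)\bigr)$. Here I would note two things: first, $f(n)^2$ is still a polynomial in $n$; second, $\ln(1/\min_x\pi(x))$ is bounded by a polynomial in the problem size, since the state space $\mathcal{X}$ of a problem of size $n$ has at most exponentially many elements (this is implicit in ``problem size'' meaning the bit-length of the encoding), whence $\min_x\pi(x)\geq$ something no smaller than an inverse-exponential in $n$, and its logarithm is ${\sf poly}(n)$. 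Thus $t_{\sf mix}(\epsilon)$ is bounded by a polynomial in $n$ and $\ln\epsilon^{-1}$, which is in particular polynomial in $n$ and $\epsilon^{-1}$, so the chain is rapidly mixing.

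For the ``only if'' direction, suppose the family is rapidly mixing, i.e.\ $t_{\sf mix}(\epsilon)\leq g(n,\epsilon^{-1})$ for some polynomial $g$. Fix any convenient constant, say $\epsilon=1/e$, so $\ln(1/\epsilon)=1$; then the left-hand bound yields $\frac{1-\Phi_n}{2\Phi_n}\leq t_{\sf mix}(1/e)\leq g(n,e)=:h(n)$, a polynomial in $n$. Rearranging, $1-\Phi_n\leq 2\Phi_n h(n)$, hence $\Phi_n\geq \frac{1}{1+2h(n)}$, and $1+2h(n)$ is again a polynomial in $n$. This gives the required lower bound on conductance with $f(n)=1+2h(n)$.

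The only genuinely non-routine point is the appearance of $\ln(1/\min_x\pi(x))$ in the upper bound: one must justify that this quantity is polynomially bounded in the problem size $n$, which relies on the (standard, and here implicit) convention that the state space has size at most $2^{{\sf poly}(n)}$ and the stationary distribution is not too lopsided—in our application $\pi$ will be explicitly controlled, so this causes no difficulty. Everything else is elementary algebraic manipulation of the displayed inequalities together with the observation that sums, products, and squares of polynomials are polynomials. I would present the two directions as two short paragraphs mirroring the two halves of the displayed bound.
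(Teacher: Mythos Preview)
Your proposal is correct and matches the paper's approach: the paper states the corollary as ``immediate'' from the displayed two-sided bound and gives no further argument, so what you have written is exactly the unpacking the paper leaves implicit. Your flagging of the $\ln(1/\min_x\pi(x))$ term as the only non-automatic point is apt; in the paper's applications $\pi$ is uniform on a state space of size at most exponential in the problem parameters, so this term is indeed ${\sf poly}(n)$.
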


\begin{corollary}
\label{cor:conduct2}
Let $\mathcal{M}_n$ be a family of irreducible and aperiodic Markov chains $\mathcal{M}_n$ of problem size $n$ such that there exists a function $\Psi(x)$ such that $\Psi(x) = \Psi(y,x)$ for all $x\not=y$, and $\pi(x) = 1/|\mathcal{X}|$ for all $x$.  Then $\mathcal{M}_n$ is rapidly mixing if and only if \[\frac{1}{|\mathcal{X}|\min_{x\in \mathcal{X}} \Psi(x)} = {\sf poly}(n).\]
\end{corollary}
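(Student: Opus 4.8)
The plan is to apply Corollary~\ref{cor:conductance} and reduce the conductance bound to the simpler quantity in the statement. First I would observe that under the hypothesis $\Psi(x,y) = \Psi(y,x)$ for all $x \neq y$ (writing $\Psi(x) := \Psi(y,x)$ when the value depends only on the source in the way stated) and $\pi(x) = 1/|\mathcal{X}|$, the Metropolis acceptance ratio $\frac{\pi(y)\Psi(y,x)}{\pi(x)\Psi(x,y)}$ equals $1$ for every pair $x \neq y$, so the Metropolis chain $P$ simply satisfies $P(x,y) = \Psi(x,y)$ for $x \neq y$. Hence for any $S$ with $\pi(S) \le \tfrac12$ the numerator of the conductance is $\sum_{x\in S, y\in S^c} \tfrac{1}{|\mathcal{X}|}\Psi(x,y)$ and the denominator is $\tfrac{|S|}{|\mathcal{X}|}$, so the factors of $|\mathcal{X}|$ cancel and
\[
\Phi = \min_{\substack{S\subseteq\mathcal{X} \\ |S| \le |\mathcal{X}|/2}} \frac{\sum_{x\in S, y\in S^c}\Psi(x,y)}{|S|}.
\]

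Next I would prove the two-sided bound $\min_{x}\Psi(x) \le |\mathcal X|\,\Phi$ and $\Phi \ge \min_x \Psi(x)$ — more precisely, that $\Phi$ and $\min_x\Psi(x)$ are polynomially related in a way that makes ``$\Phi \ge 1/\mathrm{poly}(n)$'' equivalent to ``$|\mathcal{X}|\min_x\Psi(x) \ge 1/\mathrm{poly}(n)$''. Wait — I need to be careful about which direction carries the factor $|\mathcal X|$. For the lower bound on $\Phi$: take $S$ achieving the minimum; since $S \neq \emptyset$ and $S^c \neq \emptyset$, pick any $x\in S$; because $\Psi$ is irreducible there is at least one edge out of $S$, but to get a clean bound note that for the single-element choices one has $\sum_{y\in S^c}\Psi(x,y) \ge$ (something). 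The cleanest route: for any $S$, $\frac{\sum_{x\in S,y\in S^c}\Psi(x,y)}{|S|}$ is an average over $x\in S$ of $\sum_{y\in S^c}\Psi(x,y)$, hence is at least $\min_{x\in S}\sum_{y\in S^c}\Psi(x,y)$. This can still be small if some vertex has all its mass going inside $S$, so a naive bound fails; instead I would argue via a single well-chosen $S$ for the upper bound on $\Phi$ and use $\frac{1}{|\mathcal X|}$ as the generic scale for the lower bound. Concretely, fix $x_0$ with $\sum_{y\ne x_0}\Psi(x_0,y)$ minimal and take $S=\{x_0\}$ (legitimate since $|\mathcal X|\ge 2$), giving $\Phi \le \sum_{y\ne x_0}\Psi(x_0,y)$, and since $\Psi(x_0,\cdot)$ is a sub-probability distribution with an edge of weight $\ge \min_{x}\Psi(x)$ leaving $x_0$, we get $\min_x\Psi(x)\le \Phi$ is false in general — rather $\Phi \le$ (row-sum), so this gives an \emph{upper} bound on $\Phi$ of the form $\mathrm{poly}(n)\cdot$ something. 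I would therefore set up the equivalence symmetrically: show $\frac{\min_x \Psi(x)}{|\mathcal X|} \le \Phi$ holds because every nonempty proper $S$ has $\sum_{x\in S,y\in S^c}\Psi(x,y) \ge \min_x\Psi(x)$ (using irreducibility to find one crossing edge; if no crossing edge existed $\Psi$ would not be irreducible) and $|S|\le|\mathcal X|$, and show $\Phi \le |\mathcal X|\min_x\Psi(x)$...

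Actually the honest statement to prove is: $\min_x\Psi(x) \le \Phi$ is the wrong normalization; the correct chain of inequalities is
\[
\frac{\min_{x\in\mathcal X}\Psi(x)}{|\mathcal X|} \;\le\; \Phi \;\le\; |\mathcal X|\cdot \min_{x\in\mathcal X}\Psi(x)
\]
— no. Let me just commit to the clean argument. For the lower bound: any nonempty proper $S$ has at least one edge crossing (irreducibility), and the smallest possible weight of a present edge is $\ge \min_x\Psi(x)$ over edges actually used; combined with $|S| \le |\mathcal X|$ this gives $\Phi \ge \min_x\Psi(x)/|\mathcal X|$, hence $1/(|\mathcal X|\min_x\Psi(x)) \le 1/(|\mathcal X|^2\Phi) \le$ ... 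For the upper bound: with $S = \{x_0\}$ as above, $\Phi \le \sum_{y\ne x_0}\Psi(x_0,y) \le (|\mathcal X|-1)\max_{x\ne y}\Psi(x,y)$. The key observation making everything work is that the quantity $\frac{1}{|\mathcal X|\min_x\Psi(x)}$ is polynomially bounded iff $\Phi$ is polynomially lower-bounded, because $|\mathcal X|\min_x\Psi(x)$ and $\Phi$ differ by at most a polynomial (in fact at most $|\mathcal X|$) factor in each direction, and $|\mathcal X| = \mathrm{poly}(n)$ is forced by the problem-size convention. So I would close by combining these two inequalities with Corollary~\ref{cor:conductance}: rapid mixing $\iff \Phi \ge 1/\mathrm{poly}(n) \iff \min_x\Psi(x)/|\mathcal X| \ge 1/\mathrm{poly}(n) \iff |\mathcal X|\min_x\Psi(x) = \mathrm{poly}(n)$, where the last step also uses that $|\mathcal X| \le \mathrm{poly}(n)$ and $\min_x\Psi(x) \le 1$.

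The main obstacle I anticipate is getting the dependence on $|\mathcal X|$ exactly right in both directions and confirming that $|\mathcal X|$ is indeed polynomially bounded in the problem size $n$ in the intended applications (so that it may be absorbed into ``$\mathrm{poly}(n)$'' freely); this is presumably implicit in the phrase ``problem size $n$'' but should be stated. A secondary subtlety is the interpretation of the hypothesis ``$\Psi(x) = \Psi(y,x)$ for all $x\ne y$'' — I read it as: the proposal probability of moving into $x$ is the same from every other state, and symmetry $\Psi(x,y)=\Psi(y,x)$ then forces $\Psi(y,x)$ to depend only on $x$; one must check this is consistent with $\Psi(x,\cdot)$ being a (sub)probability distribution, which it is, and that it makes the Metropolis ratio trivially $1$, which it does since $\pi$ is uniform and $\Psi$ symmetric.
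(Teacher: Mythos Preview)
Your proposal has two genuine gaps that prevent it from going through.

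First, you misread the hypothesis. The assumption is that $\Psi(y,x)=\Psi(x)$ for all $y\neq x$, i.e.\ the proposal probability into $x$ depends only on the \emph{target} $x$; equivalently, the rows of $\Psi$ are all equal to the same distribution. This does \emph{not} say $\Psi$ is symmetric, and in general $\Psi(x,y)=\Psi(y)\neq\Psi(x)=\Psi(y,x)$. Consequently the Metropolis ratio is $\frac{\pi(y)\Psi(y,x)}{\pi(x)\Psi(x,y)}=\frac{\Psi(x)}{\Psi(y)}$, not $1$, and the Metropolized transition is $P(x,y)=\min\{\Psi(x),\Psi(y)\}$ for $x\neq y$. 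Your simplification $P(x,y)=\Psi(x,y)$ is incorrect.

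Second, and more seriously, your bounds lose factors of $|\mathcal X|$ and you then try to absorb them by asserting that ``$|\mathcal X|=\mathrm{poly}(n)$ is forced by the problem-size convention.'' In the intended application $\mathcal X={\sf SVT}(\lambda,N,S)$, so $|\mathcal X|=f^{\lambda,N,S}$ is precisely the (generally exponential) quantity the FPRAS is meant to approximate; you absolutely cannot treat $|\mathcal X|$ as polynomial in $n$. Your lower bound $\Phi\ge \min_x\Psi(x)/|\mathcal X|$, obtained by counting a single crossing edge, is off from the truth by a factor of order $|\mathcal X|^{2}$, and this destroys the equivalence.

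The paper avoids both issues by computing $\Phi$ exactly up to a constant. Using $P(x,y)=\min\{\Psi(x),\Psi(y)\}$ and the uniform stationary distribution, one has for each $k\le|\mathcal X|/2$
\[
\min_{|S|=k}\sum_{x\in S,\,y\in S^c}\min\{\Psi(x),\Psi(y)\}
=(|\mathcal X|-k)\sum_{i=1}^{k}\Psi(x_i),
\]
where $\Psi(x_1)\le\cdots\le\Psi(x_{|\mathcal X|})$; the point is that \emph{every} pair $(x,y)$ contributes, not just one crossing edge. Dividing by $k$ and minimizing over $k$ gives $\Phi_n=\Theta\bigl(|\mathcal X|\,\Psi(x_1)\bigr)=\Theta\bigl(|\mathcal X|\min_x\Psi(x)\bigr)$, and the equivalence then follows directly from Corollary~\ref{cor:conductance} with no stray $|\mathcal X|$ factors to absorb.
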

\begin{proof}
Label the elements of $\mathcal{X}$ as $x_1,\dots, x_{|\mathcal{X}|}$ such that $\Psi(x_1)\leq \Psi(x_2)\leq \dots \leq \Psi(x_{|\mathcal{X}|})$.  

Using the statement hypotheses, we have that 
\begin{align*}
\min_{\substack{S \subseteq \mathcal{X} \\ \pi(S) \leq \frac{1}{2}}} \frac{\displaystyle \sum_{x \in S, y \in S^c}\pi(x)P(x,y)}{\pi(S)} =&\ \min_{1\leq k \leq |\mathcal{X}|/2}\min_{\substack{S \subseteq \mathcal{X} \\ |S| = k}} \frac{\displaystyle \sum_{x \in S, y \in S^c}\pi(x)P(x,y)}{\pi(S)}\\
=&\ \min_{1\leq k \leq |\mathcal{X}|/2}\min_{\substack{S \subseteq \mathcal{X} \\ |S| = k}} \frac{\displaystyle \sum_{x \in S, y \in S^c}\frac{1}{|\mathcal{X}|}P(x,y)}{\frac{k}{|\mathcal{X}|}}\\
=&\ \min_{1\leq k \leq |\mathcal{X}|/2}\frac{1}{k}\min_{\substack{S \subseteq \mathcal{X} \\ |S| = k}} \displaystyle \sum_{x \in S, y \in S^c}P(x,y)\\
=&\ \min_{1\leq k \leq |\mathcal{X}|/2}\frac{1}{k}\min_{\substack{S \subseteq \mathcal{X} \\ |S| = k}} \displaystyle \sum_{x \in S, y \in S^c}\Psi(x,y)\min\{1, \frac{\pi(y)\Psi(y,x)}{\pi(x)\Psi(x,y)} \}\\
=&\ \min_{1\leq k \leq |\mathcal{X}|/2}\frac{1}{k}\min_{\substack{S \subseteq \mathcal{X} \\ |S| = k}} \displaystyle \sum_{x \in S, y \in S^c}\min\{\Psi(x,y), \Psi(y,x) \}\\
=&\ \min_{1\leq k \leq |\mathcal{X}|/2}\frac{1}{k}\min_{\substack{S \subseteq \mathcal{X} \\ |S| = k}} \displaystyle \sum_{x \in S, y \in S^c}\min\{\Psi(y), \Psi(x) \}.
\end{align*}
The summation $\sum_{x \in S, y \in S^c}\min\{\Psi(y), \Psi(x) \}$ has $k(|\mathcal{X}|-k)$ summands. For a fixed $x\in \mathcal{X}$, $\Psi(x)$ can occur as a summand at most $|\mathcal{X}|-k$ times. Combining these two facts, we see that the summation would be minimized if each $\Psi(x_i)$, for $1 \leq i \leq k$, appeared as a summand $(|\mathcal{X}|-k)$ times (since these are the smallest possible summand values). Thus $\sum_{i=1}^k (|\mathcal{X}|-k)\Psi(x_i) \leq \sum_{x \in S, y \in S^c}\min\{\Psi(y), \Psi(x) \}$. This lower bound is attained by setting $S = \{x_1,\dots, x_k\}$, and so this $S$ is the minimum. This implies
\begin{align*}
\min_{\substack{S \subseteq \mathcal{X} \\ \pi(S) \leq \frac{1}{2}}} \frac{\displaystyle \sum_{x \in S, y \in S^c}\pi(x)P(x,y)}{\pi(S)} =&\ \min_{1\leq k \leq |\mathcal{X}|/2}\frac{1}{k}\min_{\substack{S \subseteq \mathcal{X} \\ |S| = k}} \displaystyle \sum_{x \in S, y \in S^c}\min\{\Psi(y), \Psi(x) \}\\
=&\ \min_{1\leq k \leq |\mathcal{X}|/2}\frac{1}{k}(|\mathcal{X}|-k)\sum_{i=1}^k \Psi(x_i).
\end{align*}
Replacing the $k$ in $(|\mathcal{X}|-k)$ with either $0$ or $\frac{|\mathcal{X}|}{2}$ we arrive at the bounds
\[\frac{|\mathcal{X}|}{2}\min_{1\leq k\leq |\mathcal{X}|/2}\frac{1}{k}\sum_{i=1}^k \Psi(x_i) \leq \Phi_n\leq |\mathcal{X}|\min_{1\leq k\leq |\mathcal{X}|/2}\frac{1}{k}\sum_{i=1}^k \Psi(x_i).\]
Since $\min_{1\leq k\leq |\mathcal{X}|/2}\frac{1}{k}\sum_{i=1}^k \Psi(x_i) = \Psi(x_1)$,
\[\Phi_n = \Theta(|\mathcal{X}| \Psi(x_1))\]
Therefore, $\Phi_n$ and $|\mathcal{X}|\min_{x\in \mathcal{X}} \Psi(x)$ have the same growth rate, and in particular their reciprocals have the same growth rate. Thus, if either is bounded by a polynomial in $n$, then the other is bounded by a polynomial in $n$. Our result follows by Corollary \ref{cor:conductance}.
\end{proof}

\subsection{The FPAUS}  Denote by $U$ the uniform distribution on the set ${\sf SVT}(\lambda, N, S)$. A FPAUS for $N$-standard set-valued tableau is a randomized algorithm that takes as input $k$, $\lambda$, and $S$, as well as a bias parameter $\delta$ and outputs a random $T \in {\sf SVT}(\lambda, N, S)$ from a distribution $\Gamma$ on ${\sf SVT}(\lambda, N, S)$, with $d_{\sf TV}(\Gamma,U) \leq \delta$, in time polynomial in $k$,$|\lambda \setminus S|$, $\log \delta^{-1}$. When $S = E_{\lambda, N}$, then $k=N$, $\lambda \setminus S = \lambda$ and this gives a distribution $\Gamma$ on ${\sf SVT}(\lambda, N)$.

Let $\mathbb{P}_{\sf SVG}$ be the distribution on ${\sf SVT}(\lambda, N, S)$ given by $\mathbb{P}_{\sf SVG}(T) = \mathbb{P}(T = {\sf SVGen}(\lambda, N, S))$. Our initial Markov chain $\tilde{\mathcal{MC}}_{\sf SVT}(\lambda, N, S)$ on the state space ${\sf SVT}(\lambda, N, S)$ has transition matrix $\tilde{P}$ given by $\tilde{P}(T,\cdot) = \mathbb{P}_{\sf SVG}(\cdot, k)$. In other words, the distribution at $T$ does not depend on $T$, and hence $\tilde{P}$ is a rank $1$ matrix. We now apply the Metropolis algorithm to modify the steady state distribution of $\tilde{\mathcal{MC}}_{\sf SVT}(\lambda, N, S)$ to the uniform distribution $U$. Since $U(T) / U(F) = 1$ for all $F,T \in {\sf SVT}(\lambda, N, S)$, the new Markov chain, which we denote $\mathcal{MC}_{\sf SVT}(\lambda, N, S)$, will have transition matrix
\[
P(T,F) = \begin{cases}\mathbb{P}_{\sf SVG}(F, k)\min\{1, \frac{\mathbb{P}_{\sf SVG}(T, k)}{\mathbb{P}_{\sf SVG}(F, k)} \} & F \neq T \\ 1 - \displaystyle \sum_{\substack{R \in {\sf SVT}(\lambda, N, S) \\ R \neq T}} \mathbb{P}_{\sf SVG}(R, k)\min\{1, \frac{\mathbb{P}_{\sf SVG}(T, k)}{\mathbb{P}_{\sf SVG}(R, k)} \} & F = T \\ \end{cases}
\]
and steady state distribution $U$.
\begin{lemma}
$\mathcal{MC}_{\sf SVT}(\lambda, N, S)$ is irreducible, aperiodic, and reversible.
\end{lemma}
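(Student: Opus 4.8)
The plan is to dispatch the three properties separately, leaning on the Metropolis construction and on the explicit probability formula of Proposition~\ref{prop:probbounds}; the whole argument is short, with the one genuinely non-formal ingredient being positivity of $\mathbb{P}_{\sf SVG}(\cdot,k)$ on the whole state space.

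\emph{Reversibility (and that $U$ is stationary).} This is essentially built into the Metropolis step, but I would spell out detailed balance since it also pins down the stationary distribution. Writing $\mathcal{X} = {\sf SVT}(\lambda,N,S)$, for distinct $T,F\in\mathcal{X}$ one has
\[
U(T)P(T,F)=\frac{1}{|\mathcal{X}|}\,\mathbb{P}_{\sf SVG}(F,k)\min\Bigl\{1,\tfrac{\mathbb{P}_{\sf SVG}(T,k)}{\mathbb{P}_{\sf SVG}(F,k)}\Bigr\}=\frac{\min\{\mathbb{P}_{\sf SVG}(T,k),\mathbb{P}_{\sf SVG}(F,k)\}}{|\mathcal{X}|},
\]
which is symmetric in $T$ and $F$; hence $U(T)P(T,F)=U(F)P(F,T)$, with the case $T=F$ trivial. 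Summing over $F$ shows $U$ is stationary, and reversibility follows.

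\emph{Irreducibility.} The key observation is that every state is reachable from every state in a single step, i.e.\ $P(T,F)>0$ for all $T\neq F$. Since the proposal matrix satisfies $\tilde P(T,F)=\mathbb{P}_{\sf SVG}(F,k)$, it suffices to show $\mathbb{P}_{\sf SVG}(F,k)>0$ for every $F\in\mathcal{X}$. Because each value $k+1,\dots,N$ occurs exactly once in the pre-tableau $S$ and $S\subseteq F$, the pre-tableau $F^{\prt{k}}$ recording the entries of $F$ exceeding $k$ coincides with $S$ by uniqueness; thus $\mathbb{P}_{\sf SVG}(F,k)=\mathbb{P}(F={\sf SVGen}(\lambda,N,k,S))$ is precisely the quantity evaluated in Proposition~\ref{prop:probbounds}, which expresses it as $\bigl(f^{\lambda^{\prt{k}}}\binom{k}{|\lambda^{\prt{k}}|}\bigr)^{-1}$ times a product of reciprocals of positive integers $|NW(\cdots)|$, hence strictly positive. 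Therefore, for $T\neq F$,
\[
P(T,F)=\mathbb{P}_{\sf SVG}(F,k)\min\Bigl\{1,\tfrac{\mathbb{P}_{\sf SVG}(T,k)}{\mathbb{P}_{\sf SVG}(F,k)}\Bigr\}=\min\{\mathbb{P}_{\sf SVG}(F,k),\mathbb{P}_{\sf SVG}(T,k)\}>0,
\]
so the chain is irreducible (with $t=1$); the degenerate case $|\mathcal{X}|=1$ is immediate.

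\emph{Aperiodicity.} It is enough to show $P(T,T)>0$ for every $T$, which forces the period of $T$ to be $1$. Using that $\mathbb{P}_{\sf SVG}(\cdot,k)$ is a probability distribution on $\mathcal{X}$ (Proposition stating ${\sf SVGen}$ returns an element of $\mathcal{X}$), so $\sum_{R\in\mathcal{X}}\mathbb{P}_{\sf SVG}(R,k)=1$,
\[
P(T,T)=1-\sum_{R\neq T}\min\{\mathbb{P}_{\sf SVG}(R,k),\mathbb{P}_{\sf SVG}(T,k)\}\;\ge\;1-\sum_{R\neq T}\mathbb{P}_{\sf SVG}(R,k)=\mathbb{P}_{\sf SVG}(T,k)>0,
\]
invoking the positivity established above. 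Hence every state has period $1$ and the chain is aperiodic. The only step that is not purely formal manipulation of the Metropolis matrix is the positivity $\mathbb{P}_{\sf SVG}(F,k)>0$ for all $F\in\mathcal{X}$, and even that reduces immediately to the identification $F^{\prt{k}}=S$ together with the product formula of Proposition~\ref{prop:probbounds}.
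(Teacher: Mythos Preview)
Your proof is correct and follows the same approach as the paper: use positivity of $\mathbb{P}_{\sf SVG}(\cdot,k)$ on the whole state space to obtain irreducibility and aperiodicity, and invoke the Metropolis construction for reversibility. The paper's own proof is much terser, essentially just asserting that positivity gives the first two properties and that reversibility is ``by construction''; you have supplied the explicit detailed-balance computation, the one-step reachability $P(T,F)=\min\{\mathbb{P}_{\sf SVG}(T,k),\mathbb{P}_{\sf SVG}(F,k)\}>0$, and the bound $P(T,T)\ge \mathbb{P}_{\sf SVG}(T,k)>0$, which are exactly the verifications the paper leaves implicit.
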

\begin{proof}
Since $\mathbb{P}_{\sf SVG}(F, k) > 0$ for all $T \in {\sf SVT}(\lambda, N, S)$, it is possible to transition from any state $T$ to any state $F$. This immediately implies that both $\tilde{\mathcal{MC}}_{\sf SVT}(\lambda, N, S)$ and $\tilde{\mathcal{MC}}_{\sf SVT}(\lambda, N, S)$ are irreducible and aperiodic. $\mathcal{MC}_{\sf SVT}(\lambda, N, S)$ is reversible by construction as it is the result of applying the Metropolis algorithm to $\tilde{\mathcal{MC}}_{\sf SVT}(\lambda, N, S)$.
\end{proof}

\begin{theorem}
\label{theorem:main}
Fix a rectangle $\mu = (p^q)$.  Let $\mathcal{F}_{p,q} \subseteq \mathcal{SVT}$ be the subset such that $\lambda \subseteq \mu \cup \lambda^{\circ}$ where $\lambda^{\circ}$ is a partition of rank less than three. Then $\mathcal{MC}_{\sf SVT}(\lambda,N,S)$ is rapidly mixing for all $(\lambda,N,S)\in \mathcal{F}_{p,q}$ and hence there is a FPAUS for $\mathcal{F}_{p,q}$.
\end{theorem}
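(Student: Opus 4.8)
The plan is to invoke Corollary~\ref{cor:conduct2} with $\mathcal{X} = {\sf SVT}(\lambda,N,S)$ and $\Psi(T) = \mathbb{P}_{\sf SVG}(T,k)$: the Metropolis construction of $\mathcal{MC}_{\sf SVT}(\lambda,N,S)$ exactly matches the hypotheses of that corollary (the ``proposal'' probability out of any state does not depend on the current state, so $\Psi(y,x) = \Psi(x) := \mathbb{P}_{\sf SVG}(x,k)$, and the target is uniform). Hence it suffices to show that
\[
|{\sf SVT}(\lambda,N,S)| \cdot \min_{T \in {\sf SVT}(\lambda,N,S)} \mathbb{P}_{\sf SVG}(T,k)
\]
is bounded below by the reciprocal of a polynomial in $k$ and $|\lambda\setminus S| = |\lambda^{\prt{k}}|$ (equivalently in $|\lambda|$ and $N$, since these are polynomially related in the regime of interest). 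Using the lower bound in \eqref{eq:kbounds}, it is enough to prove
\[
\frac{f^{\lambda,N,S}}{f^{\lambda^{\prt{k}}}\binom{k}{|\lambda^{\prt{k}}|}\,\big({\sf sv}((\lambda^{\prt{k}})^+)\big)^{\,k-|\lambda^{\prt{k}}|}} \;\geq\; \frac{1}{{\sf poly}(|\lambda|,N)},
\]
and symmetrically that this quantity is $O(1)$ is not needed — only the lower bound matters for rapid mixing.

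The combinatorial heart of the argument is therefore a lower bound on $f^{\lambda,N,S}$, the number of ways to complete the pre-tableau $S$, large enough to overcome the denominator. The key structural input is the asymptotic rank two hypothesis: $\lambda \subseteq \mu \cup \lambda^{\circ}$ with $\mu = (p^q)$ fixed and $\lambda^{\circ}$ of rank at most two. This forces ${\sf sv}(\lambda)$, and hence ${\sf sv}((\lambda^{\prt{k}})^+)$, to be bounded by a constant depending only on $p,q$ (a staircase $\delta^m$ fits inside $\mu \cup \lambda^{\circ}$ only for $m = O(p+q)$), so the troublesome factor $({\sf sv}((\lambda^{\prt{k}})^+))^{k-|\lambda^{\prt{k}}|}$ is at most $C^{\,k}$ for a constant $C$. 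I would then show that $f^{\lambda,N,S}$ is itself at least $c^{\,k}$ for a constant $c > C$ divided by a polynomial: every empty cell of $\lambda^{\prt{k}}$ can be populated, and crucially the ``extra'' $k - |\lambda^{\prt{k}}|$ small values beyond one per cell can be distributed among the boxes of the Sylvester triangle region (and more generally along rows, exploiting that in a rank $\le 2$ shape long rows are available) in a way that multiplies the count by a growing exponential factor. Concretely, since at least two cells of $\lambda^{\prt{k}}$ lie in a common row or there is a box (such as $(1,1)$) whose ``${\sf NW}$-reachable'' set stays small, one can insert the surplus values into a bounded set of boxes with enough multiplicity to beat $\binom{k}{|\lambda^{\prt{k}}|}$ up to polynomial factors — this is the standard ``stars and bars count dominates when the number of bins is bounded below by a constant fraction of a quantity comparable to $k$'' phenomenon, combined with $f^{\lambda^{\prt{k}}} \ge 1$.

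In more detail, I would split into the regime $k - |\lambda^{\prt{k}}|$ small (say bounded by a polynomial) versus large. In the first regime $\binom{k}{|\lambda^{\prt{k}}|}$ and $({\sf sv})^{k-|\lambda^{\prt{k}}|}$ are both polynomial, and $f^{\lambda,N,S} \ge f^{\lambda^{\prt{k}}} \ge 1$ suffices. In the second regime, $k$ (and $N$) are large compared to $|\lambda^{\prt{k}}|$, and I claim $f^{\lambda,N,S}$ grows like $\binom{k}{|\lambda^{\prt{k}}|}$ times an exponential: to see this, note that completing $S$ amounts to choosing, for the $m := k - |\lambda^{\prt{k}}|$ surplus values, a weak composition recording how many land in each box subject to column/row strictness, and the number of such completions is comparable to $\binom{k}{|\lambda^{\prt{k}}|}$ times the number of order-preserving ways to slot surplus entries, which for a rank $\le 2$ (plus fixed rectangle) shape is at least $2^{\Theta(m)}$ because there are $\Omega(1)$ ``free'' positions per surplus value along the long rows; this exponential factor, being at least $C^m$ for the right choice, cancels the ${\sf sv}^m$ factor. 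The main obstacle is precisely making this last combinatorial lower bound on $f^{\lambda,N,S}$ rigorous and uniform over all admissible $S$ — one must argue it for an arbitrary pre-tableau, not just $S = E_{\lambda,N}$, and the dependence of $(\lambda^{\prt{k}})^+$ and the NW-sets on $S$ needs to be controlled; I expect this is handled by a careful but elementary injection/counting argument exploiting that the ${\sf sv}$-bound is a property of $\lambda$ (hence of $(\lambda^{\prt{k}})^+ \subseteq \mu \cup \lambda^\circ$ with a slightly larger fixed rectangle) and is insensitive to $S$.
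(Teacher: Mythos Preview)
Your high-level strategy matches the paper's: reduce via Corollary~\ref{cor:conduct2} to showing
\[
\frac{f^{\lambda^{\prt{k}}}\binom{k}{|\lambda^{\prt{k}}|}\big({\sf sv}((\lambda^{\prt{k}})^+)\big)^{k-|\lambda^{\prt{k}}|}}{|{\sf SVT}(\lambda,N,S)|}={\sf poly}(|\lambda^{\prt{k}}|,k),
\]
and then lower-bound $|{\sf SVT}(\lambda,N,S)|$. Your observation that ${\sf sv}((\lambda^{\prt{k}})^+)$ is bounded by a constant $C=C(p,q)$ is correct and is a point the paper does not actually exploit; it is, however, not enough to close the argument by itself.

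The genuine gap is the lower bound on $|{\sf SVT}(\lambda,N,S)|$. You need it to contain, simultaneously, a factor comparable to $f^{\lambda^{\prt{k}}}$, a factor comparable to $\binom{k}{|\lambda^{\prt{k}}|}$, and a factor of exactly $({\sf sv})^{m}$ with $m=k-|\lambda^{\prt{k}}|$, all up to polynomial losses. Your sketch asserts ``$\Omega(1)$ free positions per surplus value'' and that this exponential can be made ``at least $C^m$ for the right choice,'' but gives no mechanism for producing ${\sf sv}$ independent legal destinations for each surplus value while keeping the tableau standard; a $2^{\Theta(m)}$ factor does not cancel a $5^m$. The paper supplies exactly this mechanism: after reserving a standard filling $F_0$ on $\lambda^{\prt{k}}$ from a restricted family $D(\lambda^{\prt{k}},\mu)$ and choosing which $|\lambda^{\prt{k}}|$ values of $[k]$ act as cell-maxima, it partitions the surplus $[k]\setminus R$ into ${\sf sv}$ blocks and routes each block either into a single anti-diagonal cell $({\sf sv}-i+1,i)$ of the Sylvester triangle or along one of the two long rows/columns via a monotone insertion rule. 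The rank-two hypothesis is used precisely here, to guarantee that these ${\sf sv}$ insertion channels exist and stay standard; this is not a consequence of ${\sf sv}$ being bounded.

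A second gap is the $f^{\lambda^{\prt{k}}}$ factor. Your regime split is not well-posed (if ``small $m$'' means $m$ bounded by a polynomial in $k$, then $\binom{k}{m}$ need not be polynomial), and in the main regime you never explain why the number of admissible skeletons is within a polynomial factor of $f^{\lambda^{\prt{k}}}$. In the paper this is the claim $f^{\lambda^{\prt{k}}}/d(\lambda^{\prt{k}},\mu)={\sf poly}$, proved by a hook-length comparison: the complement $\lambda^{\prt{k}}\setminus(\mu^1\cup\mu^2)$ splits into two two-row/two-column pieces $\nu^1,\nu^2$, and one shows $d(\lambda^{\prt{k}},\mu)\ge \binom{|\nu^1|+|\nu^2|}{|\nu^1|}f^{\nu^1}f^{\nu^2}$ together with a hook-length inequality bounding $f^{\lambda^{\prt{k}}}$ above by the same product times $O(|\lambda^{\prt{k}}|^{|\mu^1|+|\mu^2|})$. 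None of this is visible in your outline, and it is where the fixed-rectangle part of the hypothesis (so that $|\mu^1|+|\mu^2|=O(1)$) actually enters.
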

\begin{proof}
Without loss of generality, we may assume that $p,q\geq 2$ since $\mathcal{F}_{p,q} \subseteq \mathcal{F}_{p',q'}$ for all $p' \geq p$ and $q' \geq q$.  Fix a $T\in {\sf SVT}(\lambda, N,S)$. Recall that $\lambda^{\prt{k}} := \lambda \setminus T^{\prt{k}} = \lambda \setminus S$. By Corollary \ref{cor:conduct2}, it suffices to show that 
\begin{equation}
\label{eqn:polymix00}
\left(|{\sf SVT}(\lambda, N,S)|\min_{T\in {\sf SVT}(\lambda, N,S)}\mathbb{P}_{\sf SVG}(T, k)\right)^{-1} = {\sf poly}(|\lambda^{\prt{k}}|,k).
\end{equation}
By Proposition \ref{prop:probbounds}, \eqref{eqn:polymix00} is equivalent, if $k=N$, to showing that
\begin{equation}
\label{eqn:polymix0}
\left(\frac{|{\sf SVT}(\lambda, N,S)|}{f^\lambda \binom{N-1}{|\lambda|-1}({\sf sv}(\lambda))^{N-|\lambda|}}\right)^{-1} =\frac{f^\lambda \binom{N-1}{|\lambda|-1}({\sf sv}(\lambda))^{N-|\lambda|}}{|{\sf SVT}(\lambda, N)|} = {\sf poly}(|\lambda|,N),
\end{equation}
and if $0 \leq k < N$, that
\begin{equation}
\label{eqn:polymixk}
\left(\frac{|{\sf SVT}(\lambda, N,S)|}{f^{\lambda^{\prt{k}}} \binom{k}{|\lambda^{\prt{k}}|}({\sf sv}((\lambda^{\prt{k}})^+))^{k-|\lambda^{\prt{k}}|}}\right)^{-1} =\frac{f^{\lambda^{\prt{k}}} \binom{k}{|\lambda^{\prt{k}}|}({\sf sv}((\lambda^{\prt{k}})^+))^{k-|\lambda^{\prt{k}}|}}{|{\sf SVT}(\lambda, N,S)|} = {\sf poly}(|\lambda^{\prt{k}}|,k).
\end{equation}
In either case, it is equivalent to show that 
\begin{equation}
\label{eqn:polymixgen}
\frac{f^{\lambda^{\prt{k}}} \binom{k}{|\lambda^{\prt{k}}|}({\sf sv}((\lambda^{\prt{k}})^+))^{k-|\lambda^{\prt{k}}|}}{|{\sf SVT}(\lambda, N,S)|} = {\sf poly}(|\lambda^{\prt{k}}|,k),
\end{equation}
since \eqref{eqn:polymixgen} is equivalent to \eqref{eqn:polymixk}, and differs from \eqref{eqn:polymix0} by a polynomial factor, $\frac{N}{|\lambda|}$.

Our goal is to lower-bound $|{\sf SVT}(\lambda,N,S)|$ by constructing elements $F\in {\sf SVT}(\lambda,N,S)$ as follows.  We break the construction into three cases.

\noindent {\emph Case 1 ($ (\lambda^{\prt{k}})_1 > (\lambda^{\prt{k}})_2 > p$ and $ (\lambda^{\prt{k}})'_1 > (\lambda^{\prt{k}})'_2 > q$):} Let \[\mu^2 = \{(r,c)\in \lambda^{\prt{k}}\setminus \delta^{{\sf sv}((\lambda^{\prt{k}})^+)-1}: r,c>2\}\cup \{(1,(\lambda^{\prt{k}})_1),(\ell(\lambda^{\prt{k}}),1)\}\] and \[\mu^1 = (((p+1)^{q+1})\cap \lambda^{\prt{k}})\setminus \mu^2.\]  Let $D(\lambda^{\prt{k}},\mu) \subseteq {\sf SYT}(\lambda^{\prt{k}})$ be all elements $F_0$ such that ${\sf cell}_{F_0}(i) \in  \mu^1$ for all $i\in [|\mu^1|]$, and ${\sf cell}_{F_0}(i)\in \mu^2$ for all $i\in [|\lambda^{\prt{k}}|-|\mu^2|+1  , |\lambda^{\prt{k}}|]$.

Now, let $R \subset [k]$ such that $|R| = |\lambda^{\prt{k}}|$ and $[|\mu^1|]\cup [k-|\mu^2|+1  , k] \subseteq R$.  Partition $[k]\setminus R$ into $R_1,R_2,\dots , R_{{\sf sv}((\lambda^{\prt{k}})^+)}$.  For an $F_0 \in D(\lambda^{\prt{k}},\mu)$, $F$ is constructed as follows.
\begin{enumerate}
\item
Start with $S$
\item
Place the $i^{th}$ smallest element of $R$ in ${\sf cell}_{F_0}(i)$.
\item
Starting with the smallest value and moving in increasing order, insert the values of $R_1$ into the highest box in the first column such that the inserted element is not the largest element of its cell.
\item
Starting with the largest value and moving in decreasing order, insert the values of $R_2$ into the lowest box of the second column such that the inserted element is not the smallest element of its cell.
\item
For $3\leq i\leq {\sf sv}((\lambda^{\prt{k}})^+)-2$, insert all of the elements of $R_i$ into $({\sf sv}((\lambda^{\prt{k}})^+)+1-i,i)$.
\item
Starting with the largest value and moving in decreasing order, insert the values of $R_{{\sf sv}((\lambda^{\prt{k}})^+)-1}$ into the rightmost box of the second row such that the inserted element is not the smallest element of its cell.
\item
Starting with the smallest value and moving in increasing order, insert the values of $R_{{\sf sv}((\lambda^{\prt{k}})^+)}$ into the leftmost box in the first row such that the inserted element is not the largest element of its cell.
\end{enumerate}

After each step, $F$ remains row and column standard and hence $F \in {\sf SVT}(\lambda,N,S)$. It is an easy check to verify that each choice of $F_0$, $R$, $R_1,R_2,\dots ,$ and $R_{{\sf sv}((\lambda^{\prt{k}})^+)}$ yields a unique $F$. Then $d(\lambda^{\prt{k}},\mu)=|D(\lambda^{\prt{k}},\mu)|$ is the number of possible choices for $F_0$.  There are $\binom{k-|\mu^1|-|\mu^2|}{|\lambda^{\prt{k}}|-|\mu^1|-|\mu^2|}$ choices for $R$ and $({\sf sv}((\lambda^{\prt{k}})^+))^{k-|\lambda^{\prt{k}}|}$ choices for $R_1,\dots, R_{{\sf sv}((\lambda^{\prt{k}})^+)}$.  As a result, \[d(\lambda^{\prt{k}},\mu)\binom{k-|\mu^1|-|\mu^2|}{|\lambda^{\prt{k}}|-|\mu^1|-|\mu^2|}({\sf sv}((\lambda^{\prt{k}})^+))^{k-|\lambda^{\prt{k}}|} \leq |{\sf SVT}(\lambda,N,S)|.\]  Thus, the left hand side of \eqref{eqn:polymixgen} becomes: 
\begin{align*}
\frac{f^{\lambda^{\prt{k}}} \binom{k}{|\lambda^{\prt{k}}|}({\sf sv}((\lambda^{\prt{k}})^+))^{k-|\lambda^{\prt{k}}|}}{|{\sf SVT}(\lambda, N,S)|} &\leq  \frac{f^{\lambda^{\prt{k}}} \binom{k}{|\lambda^{\prt{k}}|}({\sf sv}((\lambda^{\prt{k}})^+))^{k-|\lambda^{\prt{k}}|}}{d(\lambda^{\prt{k}},\mu)\binom{k-|\mu^1|-|\mu^2|}{|\lambda^{\prt{k}}|-|\mu^1|-|\mu^2|}({\sf sv}((\lambda^{\prt{k}})^+))^{k-|\lambda^{\prt{k}}|}}\\
&= \frac{f^{\lambda^{\prt{k}}} \binom{k}{|\lambda^{\prt{k}}|}}{d(\lambda^{\prt{k}},\mu)\binom{k-|\mu^1|-|\mu^2|}{|\lambda^{\prt{k}}|-|\mu^1|-|\mu^2|}}\\
&= \frac{f^{\lambda^{\prt{k}}} }{d(\lambda^{\prt{k}},\mu)} O(\frac{k^{|\mu^1|+|\mu^2|}}{|\lambda^{\prt{k}}|^{|\mu^1|+|\mu^2|}})\\
&= \frac{f^{\lambda^{\prt{k}}} }{d(\lambda^{\prt{k}},\mu)} {\sf poly}(|\lambda^{\prt{k}}|,k),
\end{align*}
where in the last step we have used the fact that $|\mu^1|+|\mu^2|\leq |\mu|+6 = \Theta(1)$.  Thus the following claim proves our desired result in this case.
\begin{claim}
\[\frac{f^{\lambda^{\prt{k}}} }{d(\lambda^{\prt{k}},\mu)} = {\sf poly}(|\lambda^{\prt{k}}|,k)\]
\end{claim} 
\begin{proof}
By the hook-length formula,
\[f^{\lambda^{\prt{k}}} = \frac{|\lambda^{\prt{k}}|!}{\prod_{(r,c)\in \lambda^{\prt{k}}} |h_{\lambda^{\prt{k}}}(r,c)| +1 }.\]

We lower bound $d(\lambda^{\prt{k}},\mu)$ by constructing elements $F \in D(\lambda^{\prt{k}},\mu)$. First, fill the boxes of $F$ in $\mu^1$ with the values $[|\mu^1|]$ in any way that is row and column standard. Second, fill the boxes of $F$ in $\mu^2$ with the values in $[|\lambda^{\prt{k}}|-|\mu^2|+1, |\lambda^{\prt{k}}|]$ in any way that is row and column standard. There is always at least one way to do each of these two steps. Third, we need to fill the boxes of $F$ in $\lambda^{\prt{k}}\setminus (\mu^1\cup \mu^2)$ with the values $[|\mu^1|+1, |\lambda^{\prt{k}}|-|\mu^2|+1]$ in a way that is row and column standard. This may be achieved as follows.

Notice that $\lambda^{\prt{k}}\setminus (\mu^1\cup \mu^2)$ has the shape of two separate partitions, one in the first two rows of $\lambda^{\prt{k}}\setminus (\mu^1\cup \mu^2)$, and another in the first two columns. Denote these two partitions $\nu^1$ and $\nu^2$, respectively. For $(r,c)\in \nu^1$ or $(r,c)\in \nu^2$, let $(r,c)^{\circ}$ be the corresponding box in $\lambda^{\prt{k}}\setminus (\mu^1\cup \mu^2)$ and $\lambda^{\prt{k}}$. Partition $[|\mu^1|+1, |\lambda^{\prt{k}}|-|\mu^2|+1]$ into subsets $R_1$ and $R_2$ of size $|\nu^1|$ and $|\nu^2|$, respectively. Given a $Z_1 \in {\sf SYT}(\nu^1)$ we place the $i$th smallest element of $R_1$ into $F$ in $(r,c)^{\circ}$ where $(r,c)={\sf cell}_{Z_1}(i)$ in $\nu^1$. Given a $Z_2 \in {\sf SYT}(\nu^2)$ we place the $i$th smallest element of $R_2$ into $F$ in $(r,c)^{\circ}$ where $(r,c)={\sf cell}_{Z_2}(i)$ in $\nu^1$. Once this is done we have $F \in D(\lambda^{\prt{k}},\mu)$. 

The number of ways to achieve the third step is thus $\binom{|\nu^1|+|\nu^2|}{|\nu^1|}f^{\nu^1}f^{\nu^2}$. We conclude that
\begin{equation}
\label{eq:dlowerbound}
\binom{|\nu^1|+|\nu^2|}{|\nu^1|}f^{\nu^1}f^{\nu^2} \leq d(\lambda^{\prt{k}},\mu). 
\end{equation}


For each $(r,c)\in \nu^1$ and $(a, b)\in \nu^2$, $|h_{\nu^1}(r,c)| \leq |h_{\lambda^{\prt{k}}}((r,c)^{\circ})|$ and $|h_{\nu^2}(a, b)| \leq |h_{\lambda^{\prt{k}}}((a, b)^{\circ})|$. This implies
\begin{equation}
\label{eq:hooklengths}
\prod_{(r,c)\in \nu^1} (|h_{\nu^1}(r,c)| + 1) \prod_{(r, c)\in \nu^2} (|h_{\nu^2}(r, c)| + 1) \leq \prod_{(r, c)\in \lambda^{\prt{k}}\setminus (\mu^1\cup \mu^2)} (|h_{\lambda^{\prt{k}}}(r,c)| + 1)
\end{equation}
Combining the above arguments we have
\begin{align*}
\frac{f^{\lambda^{\prt{k}}}}{d(\lambda^{\prt{k}},\mu)} &\leq \frac{f^{\lambda^{\prt{k}}}}{f^{\nu^1}f^{\nu^2}\binom{|\nu^1|+|\nu^2|}{|\nu^1|}}\\
&= \frac{|\lambda^{\prt{k}}|!}{\prod_{(r,c)\in \lambda^{\prt{k}}} |h_{\lambda^{\prt{k}}}(r,c)|+1} \frac{\prod_{(r,c)\in \nu^1} (|h_{\nu^1}(r,c)| + 1) \prod_{(r, c)\in \nu^2} (|h_{\nu^2}(r, c)| + 1)}{(|\nu^1|+|\nu^2|)!}\\
&\leq \frac{|\lambda^{\prt{k}}|!}{\prod_{(r,c)\in \lambda^{\prt{k}}} (|h_{\lambda^{\prt{k}}}(r,c)|+1)} \frac{\prod_{(r,c)\in \lambda^{\prt{k}}\setminus (\mu^1\cup \mu^2)} (|h_{\lambda^{\prt{k}}}(r,c)|+1)}{(|\nu^1|+|\nu^2|)!}\\
&= \frac{|\lambda^{\prt{k}}|!}{(|\nu^1|+|\nu^2|)!\prod_{(r,c)\in \mu^1\cup\mu^2} (|h_{\lambda^{\prt{k}}}(r,c)|+1)}\\
&\leq  \frac{|\lambda^{\prt{k}}|!}{(|\nu^1|+|\nu^2|)!}\\
&=  \binom{|\lambda^{\prt{k}}|}{|\mu^1|+|\mu^2|}(|\mu^1|+|\mu^2|)!\\
&=  O(|\lambda^{\prt{k}}|^{|\mu^1|+|\mu^2|})O(1)\\
&={\sf poly}(|\lambda^{\prt{k}}|,k)
\end{align*}
where the first inequality is by~\eqref{eq:dlowerbound}, the second inequality is by~\eqref{eq:hooklengths}, and the last two equalities follow from the fact that $|\mu^1|+|\mu^2|\leq |\mu|+6 = \Theta(1)$.
\end{proof}

\noindent {\emph Case 2 ($ (\lambda^{\prt{k}})_1 > (\lambda^{\prt{k}})_2$, $ (\lambda^{\prt{k}})'_1 > (\lambda^{\prt{k}})'_2$, and either $(\lambda^{\prt{k}})_2 \leq p$ or $(\lambda^{\prt{k}})'_2  \leq q$):} In this case, the construction to lower bound $|{\sf SVT}(\lambda,N,S)|$ is almost identical to Case 1. The first difference is that the set $\mu^2$ will also include any $(r,c) \in\lambda^{\prt{k}}\setminus \delta^{{\sf sv}((\lambda^{\prt{k}})^+)-1}$ in rows or columns wholly contained in $\mu$. The second difference is that when inserting the values in $R_i$, if the values in $R_i$ would be inserted into a column or row wholly contained in $\mu$, then the values are instead inserted into $({\sf sv}((\lambda^{\prt{k}})^+)-i+1,i)$. Otherwise, proceeding exactly as in Case 1, we arrive at a lower bound on $|{\sf SVT}(\lambda,N,S)|$ which is then used to show that \eqref{eqn:polymixgen} is satisfied.

\noindent {\emph Case 3 ($ (\lambda^{\prt{k}})_1 = (\lambda^{\prt{k}})_2$ or $ (\lambda^{\prt{k}})'_1 = (\lambda^{\prt{k}})'_2$):} This case can be reduced to one of the first two cases by placing the value $k$ (and if needed $k-1$) into $S$ in the outermost box of the second row and/or second column. Say this augmented $S$ is $S^+$.  Since $|{\sf SVT}(\lambda,N,S^+)| \leq |{\sf SVT}(\lambda,N,S)|$, we can lower bound $|{\sf SVT}(\lambda,N,S)|$ by lower bounding $|{\sf SVT}(\lambda,N,S^+)|$. This can be achieved by applying Case 1 or 2 to $S^+$.
\end{proof}
Letting $p = q = 2$ we may prove the following two corollaries.
\begin{corollary}
\label{cor:rank2fpaus}
Let $\mathcal{R}_{\leq 2} \subseteq \mathcal{SVT}$ be the subset such that the rank of $\lambda$ is less than three.  Then $\mathcal{MC}_{\sf SVT}(\lambda,N,S)$ is rapidly mixing for all $(\lambda,N,S)\in \mathcal{R}_{\leq 2}$ and hence $\mathcal{MC}_{\sf SVT}(\lambda,N,S)$ is a FPAUS for $\mathcal{R}_{\leq 2}$.
\end{corollary}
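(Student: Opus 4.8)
The plan is to deduce this corollary directly from Theorem~\ref{theorem:main}, with no new ideas needed, by exhibiting $\mathcal{R}_{\leq 2}$ as a subfamily of $\mathcal{F}_{2,2}$. First I would fix $p = q = 2$, so $\mu = (2^2)$ is the $2 \times 2$ square. Given $(\lambda, N, S) \in \mathcal{R}_{\leq 2}$, the partition $\lambda$ has rank at most two by hypothesis; setting $\lambda^{\circ} := \lambda$ then gives a partition of rank less than three with $\lambda \subseteq \mu \cup \lambda^{\circ}$, since in fact $\lambda = \lambda^{\circ} \subseteq \mu \cup \lambda^{\circ}$ trivially. Hence $(\lambda, N, S) \in \mathcal{F}_{2,2}$, and therefore $\mathcal{R}_{\leq 2} \subseteq \mathcal{F}_{2,2}$.

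Once that containment is in hand, I would simply invoke Theorem~\ref{theorem:main}: since $\mathcal{MC}_{\sf SVT}(\lambda, N, S)$ is rapidly mixing for every $(\lambda, N, S) \in \mathcal{F}_{2,2}$, it is in particular rapidly mixing for every $(\lambda, N, S) \in \mathcal{R}_{\leq 2}$. Combining this with the irreducibility, aperiodicity, and reversibility of $\mathcal{MC}_{\sf SVT}(\lambda, N, S)$ established in the preceding lemma, together with the fact that its stationary distribution is the uniform distribution $U$ on ${\sf SVT}(\lambda, N, S)$, one concludes exactly as in Theorem~\ref{theorem:main} that running the chain from any state for $t_{\sf mix}(\delta)$ steps produces the desired FPAUS for $\mathcal{R}_{\leq 2}$.

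I do not anticipate any genuine obstacle here: the entire content of the statement is carried by Theorem~\ref{theorem:main}, and the only thing requiring (immediate) verification is the set-theoretic inclusion $\mathcal{R}_{\leq 2} \subseteq \mathcal{F}_{2,2}$. The one mild point worth making explicit is why the hypothesis ``rank of $\lambda$ less than three'' permits the choice $\lambda^{\circ} = \lambda$ rather than forcing $\lambda$ to decompose as a fixed rectangle together with a separate rank-two piece — namely, that $\lambda^{\circ}$ in the definition of $\mathcal{F}_{p,q}$ is allowed to be any partition of rank less than three, so no nontrivial decomposition is needed.
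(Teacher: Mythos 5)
Your proposal is correct and matches the paper's intended argument: the paper proves this corollary exactly by setting $p=q=2$ and observing that any rank $\leq 2$ partition $\lambda$ satisfies $\lambda \subseteq \mu \cup \lambda^{\circ}$ with $\lambda^{\circ} = \lambda$, so that $\mathcal{R}_{\leq 2} \subseteq \mathcal{F}_{2,2}$ and Theorem~\ref{theorem:main} applies. No further comment is needed.
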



To extend Theorem \ref{theorem:main} to all $(\lambda, N, S)$ would require devising a method for inserting the values of the $R_i$, from the proof of Theorem \ref{theorem:main}, in a row and column standard way when there are more than two arbitrarily long rows (or columns). We believe that this should be possible if the number of such rows and columns is upper bounded by a constant, which leads us to the following conjecture.


\begin{conjecture}
Let $d\in \mathbb{N}$ and $\mathcal{F}_d \subseteq \mathcal{SVT}$ be the subset such that ${\sf sv}((\lambda\setminus S)^+) \leq d$.  Then $\mathcal{MC}_{\sf SVT}(\lambda,N,S)$ is rapidly mixing for all $(\lambda,N,S)\in \mathcal{F}_d$.\end{conjecture}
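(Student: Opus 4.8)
The plan is to follow the proof of Theorem~\ref{theorem:main} verbatim as far as possible. By Corollary~\ref{cor:conduct2} together with Proposition~\ref{prop:probbounds}, rapid mixing of $\mathcal{MC}_{\sf SVT}(\lambda,N,S)$ for $(\lambda,N,S)\in\mathcal{F}_d$ reduces to the single estimate
\[
\frac{f^{\lambda^{\prt{k}}}\binom{k}{|\lambda^{\prt{k}}|}\bigl({\sf sv}((\lambda^{\prt{k}})^{+})\bigr)^{k-|\lambda^{\prt{k}}|}}{|{\sf SVT}(\lambda,N,S)|}={\sf poly}(|\lambda^{\prt{k}}|,k),
\]
where the implied constants and polynomial degrees are allowed to depend only on $d$. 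So I would concentrate entirely on lower-bounding $|{\sf SVT}(\lambda,N,S)|$. Write $d_0:={\sf sv}((\lambda^{\prt{k}})^{+})$; the hypothesis $(\lambda,N,S)\in\mathcal{F}_d$ is precisely $d_0\le d=\Theta(1)$.

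The first step would be the decomposition underlying Claim~\ref{claim:indepbound}: there is an integer $t$ with $(t,d_0+2-t)\notin(\lambda^{\prt{k}})^{+}$, hence $\lambda^{\prt{k}}\subseteq\{(r,c):r<t\}\cup\{(r,c):c<d_0+2-t\}$. Thus $\lambda^{\prt{k}}$ is the disjoint union of a ``corner'' $C=\{(r,c)\in\lambda^{\prt{k}}: r<t,\, c<d_0+2-t\}$ of size at most $(t-1)(d_0+1-t)=\Theta(1)$, at most $t-1\le d$ ``long rows'' (boxes with $r<t$ and $c\ge d_0+2-t$), and at most $d_0+1-t\le d$ ``long columns''. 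After freezing $O(d)$ values of $S$ to break ties among the long-row lengths and among the long-column lengths (the device of Case~3 of Theorem~\ref{theorem:main}), I may assume both families are strictly nested. I would then imitate Case~1 of Theorem~\ref{theorem:main}: designate a bounded ``northwest'' frozen region $\mu^1$ to receive the smallest base values and a bounded ``outer-end'' region $\mu^2$, consisting of one receiver box at the far end of each long row and column together with the rest of $C$, to receive the largest base values; here $|\mu^1|+|\mu^2|=\Theta(1)$, and what remains of $\lambda^{\prt{k}}$ is $\nu^1\sqcup\nu^2$ with $\nu^1$ a partition with at most $d$ rows and $\nu^2$ a partition with at most $d$ columns. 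One then lower bounds $|{\sf SVT}(\lambda,N,S)|$ by the product of three counts: (i) $\binom{k-|\mu^1|-|\mu^2|}{|\lambda^{\prt{k}}|-|\mu^1|-|\mu^2|}=\binom{k}{|\lambda^{\prt{k}}|}/{\sf poly}$ choices of which values are ``base''; (ii) at least $f^{\nu^1}f^{\nu^2}\binom{|\nu^1|+|\nu^2|}{|\nu^1|}\ge f^{\lambda^{\prt{k}}}/{\sf poly}(|\lambda^{\prt{k}}|)$ base fillings, the last inequality being the hook-length computation from the Claim inside Theorem~\ref{theorem:main} (deleting $\Theta(1)$ boxes changes $f$ by at most a polynomial factor, and restriction to $\nu^1,\nu^2$ only shortens hooks); and (iii) $d_0^{\,k-|\lambda^{\prt{k}}|}$ ways of dealing out the extra values among $d_0$ ``channels'' --- one per long row, one per long column, and single boxes of $C$ if more are needed --- \emph{provided} every such dealing is realizable. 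Substituting into the displayed ratio, the factor $d_0^{k-|\lambda^{\prt{k}}|}$ cancels, the binomials cancel up to ${\sf poly}$, and one is left with $f^{\lambda^{\prt{k}}}/(f^{\lambda^{\prt{k}}}/{\sf poly})={\sf poly}$. A ``Case~2'' variant, in which long rows or columns that happen to be short are absorbed into $C$, would be handled exactly as in Theorem~\ref{theorem:main}.

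The one ingredient not supplied by the above --- and where I expect essentially all of the difficulty to lie --- is (iii): a rule that, given a base filling and an arbitrary partition of the extra values into sets $R_1,\dots,R_{d_0}$, inserts each $R_i$ into its channel so that the result is (a) row- and column-standard, (b) has each inserted value strictly below the maximum of its cell, and (c) is recovered injectively from $(\text{base values},\,\text{base filling},\,R_1,\dots,R_{d_0})$. Note that $d_0-O(1)$ and $d_0$ are \emph{not} polynomially comparable once $k-|\lambda^{\prt{k}}|$ is unbounded, so \emph{every} dealing must be realizable, not merely a polynomial fraction of them. For two long rows this is the ``insert from opposite ends'' device of Theorem~\ref{theorem:main}; for three or more long rows the middle rows are sandwiched between their neighbours in every column they meet, and no two-sided trick is available. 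Since one can afford to freeze a further $\Theta(d^2)$ ``buffer'' boxes and still lose only a polynomial factor, I would look for the required rule either by routing the extras of each channel through a bounded band of columns (resp.\ rows) near a fixed vertical (resp.\ horizontal) cut, or by an induction that peels off the outermost long row and long column at each stage, or by a jeu-de-taquin/RSK-type insertion confined to a band of width $d$. Producing one such scheme, and checking injectivity --- which should then be routine --- would complete the proof.
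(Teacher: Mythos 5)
The statement you are trying to prove is stated in the paper as a \emph{conjecture}, not a theorem: the authors give no proof, and immediately before it they identify the sole missing ingredient as ``a method for inserting the values of the $R_i$ \ldots\ in a row and column standard way when there are more than two arbitrarily long rows (or columns).'' Your proposal reproduces the authors' intended reduction faithfully --- via Corollary~\ref{cor:conduct2} and Proposition~\ref{prop:probbounds} down to the single ratio bound \eqref{eqn:polymixgen}, then the Claim~\ref{claim:indepbound}-style split of $\lambda^{\prt{k}}$ into a bounded corner plus at most $d$ long rows and at most $d$ long columns, the $\mu^1/\mu^2$ freezing, and the three-factor lower bound on $|{\sf SVT}(\lambda,N,S)|$ --- and your observation that \emph{every} dealing of the extra values into the $d_0$ channels must be realizable (because $(d_0-O(1))^{k-|\lambda^{\prt{k}}|}$ and $d_0^{\,k-|\lambda^{\prt{k}}|}$ are not polynomially comparable) is exactly the right diagnosis of why the two-row trick of Theorem~\ref{theorem:main} does not extend.

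But that diagnosis is where your argument stops. Step (iii) --- an injective, always-realizable insertion rule for $d_0\ge 3$ channels --- is the entire content of the conjecture, and none of your three candidate devices (routing through a buffer band, peeling off the outermost row and column, a jeu-de-taquin-type insertion in a band of width $d$) is constructed or verified. In particular, for a middle long row, an inserted non-maximal value must sit strictly between the entries above and below it in its column for \emph{every} admissible base filling and \emph{every} partition $R_1,\dots,R_{d_0}$; it is not at all clear that freezing only $\Theta(d^2)$ buffer boxes suffices to guarantee this, and no injectivity argument is given. So what you have is a correct reduction of the conjecture to the authors' own open combinatorial problem, not a proof of it.
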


\begin{theorem}
\label{thm:mainfixed}
Let ${\sf Fix}(|\lambda\setminus S|), {\sf Fix}(k-|\lambda\setminus S|) \subseteq \mathcal{SVT}$ be the subsets such that $|\lambda\setminus S|$ and $k-|\lambda\setminus S|$ are, respectively, $O(1)$. Then $\mathcal{MC}_{\sf SVT}(\lambda, N, S)$ is rapidly mixing, and hence $\mathcal{MC}_{\sf SVT}(\lambda,N,S)$ is a FPAUS, for ${\sf Fix}(|\lambda\setminus S|)$ and ${\sf Fix}(k-|\lambda\setminus S|)$.
\end{theorem}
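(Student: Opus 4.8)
\textbf{Proof proposal for Theorem~\ref{thm:mainfixed}.}

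The plan is to reduce both cases to the mixing criterion \eqref{eqn:polymixgen} from the proof of Theorem~\ref{theorem:main}, namely that
\[
\frac{f^{\lambda^{\prt{k}}} \binom{k}{|\lambda^{\prt{k}}|}({\sf sv}((\lambda^{\prt{k}})^+))^{k-|\lambda^{\prt{k}}|}}{|{\sf SVT}(\lambda, N,S)|} = {\sf poly}(|\lambda^{\prt{k}}|,k),
\]
by invoking Corollary~\ref{cor:conduct2} exactly as in that proof. Write $m = |\lambda^{\prt{k}}| = |\lambda\setminus S|$ and $n = k - |\lambda^{\prt{k}}|$ for the number of occupied cells and the number of ``extra'' values to be distributed, respectively. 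For ${\sf Fix}(|\lambda\setminus S|)$ we assume $m = O(1)$; for ${\sf Fix}(k - |\lambda\setminus S|)$ we assume $n = O(1)$. In either case ${\sf sv}((\lambda^{\prt{k}})^+) \le {\sf sv}(\lambda^{\prt{k}}) + 2 \le m + 2$, and $f^{\lambda^{\prt{k}}} \le m!$, so the numerator is controlled and the task is purely to lower-bound $|{\sf SVT}(\lambda,N,S)|$.

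\textbf{Case $m = O(1)$.} Here both $f^{\lambda^{\prt{k}}} \le m! = O(1)$ and ${\sf sv}((\lambda^{\prt{k}})^+) \le m+2 = O(1)$, so $({\sf sv}((\lambda^{\prt{k}})^+))^{n} \le (m+2)^n$, which is \emph{not} obviously polynomial in $k$ — and this is where a lower bound on $|{\sf SVT}(\lambda,N,S)|$ of the same exponential order is needed. I would construct such elements directly: starting from $S$, there is at least one way to fill the $m$ empty cells of $\lambda^{\prt{k}}$ with a single value each in a standard way (using up $m$ of the $k$ available values, chosen in $\binom{k}{m}$ ways, with the relative order forced by a fixed ${\sf SYT}(\lambda^{\prt{k}})$), and then the remaining $n$ values must be inserted as non-maximal entries into cells of $NW(\cdots)$. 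Because $\lambda^{\prt{k}}$ has a Sylvester triangle of size $s := {\sf sv}((\lambda^{\prt{k}})^+)$, after placing the single values there are $s$ distinct ``northwest'' cells available to receive each extra value independently (as in steps (3)--(7) of Case~1 of Theorem~\ref{theorem:main}, which remain valid verbatim since they only use the geometry of $\lambda^{\prt{k}}$, not the rank-two hypothesis). This gives $|{\sf SVT}(\lambda,N,S)| \ge \binom{k - O(1)}{m - O(1)} \cdot s^{\,n}$, and plugging this into the ratio cancels the $s^n$ factor and leaves $\binom{k}{m}/\binom{k-O(1)}{m-O(1)} \cdot f^{\lambda^{\prt{k}}} = O(k^{O(1)}) = {\sf poly}(m,k)$.

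\textbf{Case $n = O(1)$.} Now ${\sf sv}((\lambda^{\prt{k}})^+)^n \le (m+2)^{O(1)} = {\sf poly}(m)$ automatically and $\binom{k}{m} \le k^n = {\sf poly}(k)$ since $k = m + n$, so it suffices to show $f^{\lambda^{\prt{k}}}/|{\sf SVT}(\lambda,N,S)| = {\sf poly}(m,k)$. But $|{\sf SVT}(\lambda,N,S)| \ge f^{\lambda^{\prt{k}}}$: every standard filling of the $m$ empty cells extends $S$ to an element of ${\sf SVT}(\lambda, N, S)$ once the $n = O(1)$ extra values are inserted (non-maximally) in \emph{any} fixed legal way — there is always at least one such way, e.g. repeatedly appending the next value to a northwest-most cell. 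Hence the ratio is at most $\binom{k}{m} (\text{poly}) = {\sf poly}(m,k)$, more crudely the ratio $f^{\lambda^{\prt{k}}} \binom{k}{m} ({\sf sv})^n / f^{\lambda^{\prt{k}}} = \binom{k}{m}({\sf sv})^n = {\sf poly}(m,k)$ directly.

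\textbf{Main obstacle.} The delicate point is Case $m = O(1)$: the numerator carries the genuinely exponential factor $({\sf sv})^{k-m}$, so one \emph{must} exhibit exponentially many set-valued tableaux in $S$, not merely finitely many. The key observation that makes this routine is that steps (3)--(7) of Case~1 in the proof of Theorem~\ref{theorem:main} never used that $\lambda^{\prt{k}}$ has rank at most two — they only used that its Sylvester triangle has some size $s$, and they produce $s$ independent choices per extra value. Since $m = O(1)$ forces $\lambda^{\prt{k}}$ to be a fixed finite shape, all of that machinery applies with room to spare, and the bookkeeping (the $\binom{k-O(1)}{m-O(1)}$ versus $\binom{k}{m}$ comparison) is the same polynomial-factor argument already used in Theorem~\ref{theorem:main}. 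I would therefore phrase the proof as: ``apply the construction of Case~1 of Theorem~\ref{theorem:main}, noting that the hypothesis $m = O(1)$ (resp.\ $n = O(1)$) makes every factor outside the $|{\sf SVT}|$ denominator polynomial except possibly $({\sf sv})^n$, which is absorbed by the lower bound on $|{\sf SVT}(\lambda,N,S)|$,'' and then appeal to Corollary~\ref{cor:conduct2}.
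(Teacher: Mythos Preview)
Your proposal is correct and follows essentially the same strategy as the paper: reduce to the criterion \eqref{eqn:polymixgen} via Corollary~\ref{cor:conduct2}, then in the $n=O(1)$ case use the lower bound $|{\sf SVT}(\lambda,N,S)|\ge f^{\lambda^{\prt{k}}}$, and in the $m=O(1)$ case use a lower bound of order $s^{\,n}$ coming from independently distributing the extra values among the anti-diagonal cells of the Sylvester triangle of $(\lambda^{\prt{k}})^+$.

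One remark on presentation: your claim that steps (3)--(7) of Case~1 of Theorem~\ref{theorem:main} ``remain valid verbatim since they only use the geometry of $\lambda^{\prt{k}}$, not the rank-two hypothesis'' is not quite accurate --- those steps \emph{do} exploit that rows~1--2 and columns~1--2 are long, which is what the rank-two hypothesis provides. The paper sidesteps this entirely with a simpler construction for the $m=O(1)$ case: fill $\delta^{s}\cap\lambda^{\prt{k}}$ with the smallest values, then place each of the next $k-m$ values into any of the $s$ cells $(i,s+1-i)$, giving the lower bound $s^{\,n}$ directly. This is enough because $\binom{k}{m}=O(k^{m})={\sf poly}(k)$ when $m=O(1)$, so your extra $\binom{k-O(1)}{m-O(1)}$ factor is not needed. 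Your argument works once you replace the appeal to Case~1 with this direct construction; the rest is identical to the paper.
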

\begin{proof}
Fix a $T\in {\sf SVT}(\lambda, N,S)$. Recall that $\lambda^{\prt{k}} := \lambda \setminus T^{\prt{k}} = \lambda \setminus S$. By \eqref{eqn:polymixgen}, it suffices to show that \[\frac{f^{\lambda^{\prt{k}}} \binom{k}{|\lambda^{\prt{k}}|}({\sf sv}((\lambda^{\prt{k}})^+)^{k-|\lambda^{\prt{k}}|}}{|{\sf SVT}(\lambda, N,S)|} = {\sf poly}(|\lambda^{\prt{k}}|, k).\]  

If $k = O(1)$, then since $|\lambda^{\prt{k}}|\leq k$, we have that $f^{\lambda^{\prt{k}}},\binom{k}{|\lambda^{\prt{k}}}, (R((\lambda^{\prt{k}})^+)^{k-|\lambda^{\prt{k}}|} = O(1)$. Thus even the lower bound $|{\sf SVT}(\lambda, N,S)| \geq 1$ gets us the desired growth bounds.

If $|\lambda^{\prt{k}}| = O(1)$, then $f^{\lambda^{\prt{k}}},{\sf sv}((\lambda^{\prt{k}})^+) = O(1)$ and $\binom{k}{|\lambda^{\prt{k}}|} = O((k)^{|\lambda^{\prt{k}}|})$.  We will construct elements $F \in {\sf SVT}(\lambda, N,S)$ as follows. Initialize $F$ to equal $S$. Then fill the cells of $F$ that are in $\delta^{{\sf sv}((\lambda^{\prt{k}})^+)} \cap \lambda^{\prt{k}}$ in any way with the smallest values in $[k]$, such that one value is in each cell and $F$ remains row and column standard. There is always at least one way to do this. Then, for the next $k-|\lambda^{\prt{k}}|$ labels, place each value in a cell of the form $(i,{\sf sv}((\lambda^{\prt{k}})^+)+1-i)$.  This process has $k-|\lambda^{\prt{k}}|$ values that each independently have ${\sf sv}((\lambda^{\prt{k}})^+)$ choices for the cell they are placed in. Hence $({\sf sv}((\lambda^{\prt{k}})^+)^{k-|\lambda^{\prt{k}}|} \leq |{\sf SVT}(\lambda, N,S)|$.  As a result, 
\begin{align*}
\frac{f^{\lambda^{\prt{k}}} \binom{k}{|\lambda^{\prt{k}}|}({\sf sv}((\lambda^{\prt{k}})^+)^{k-|\lambda^{\prt{k}}|}}{|{\sf SVT}(\lambda, N,S)|} &\leq \frac{f^{\lambda^{\prt{k}}} \binom{k}{|\lambda^{\prt{k}}|}({\sf sv}((\lambda^{\prt{k}})^+)^{k-|\lambda^{\prt{k}}|}}{{\sf sv}((\lambda^{\prt{k}})^+)^{k-|\lambda^{\prt{k}}|}}\\
&= f^{\lambda^{\prt{k}}} \binom{k}{|\lambda^{\prt{k}}|} = O((k)^{|\lambda^{\prt{k}}|})\\
&= {\sf poly}(|\lambda^{\prt{k}}|, k)
\end{align*}
completing the proof in this case.

If $k-|\lambda^{\prt{k}}| = O(1)$, then ${\sf sv}((\lambda^{\prt{k}})^+) = O(|\lambda^{\prt{k}}|)$ and ${\sf sv}((\lambda^{\prt{k}})^+)^{k-|\lambda^{\prt{k}}|} = {\sf poly}(|\lambda^{\prt{k}}|, k)$. Similarly, $\binom{k}{|\lambda^{\prt{k}}|} = O((k)^{k-|\lambda^{\prt{k}}|}) = {\sf poly}(|\lambda^{\prt{k}}|, k)$. We construct elements $F \in {\sf SVT}(\lambda, N,S)$ as follows. Initialize $F$ to equal $S$. Then place the values $1,\dots, |\lambda^{\prt{k}}|$ into $F$ such that $F$ remains column and row standard. From largest to smallest, place each value $|\lambda^{\prt{k}}| +1,\ldots, k$ in the leftmost cell of the first row of $F$ such that the value is the smallest element in that cell. By the hook-length formula, there are $f^{\lambda^{\prt{k}}}$ ways to arrange the first $|\lambda^{\prt{k}}|$ labels, and so $f^{\lambda^{\prt{k}}} \leq {\sf SVT}(\lambda, N,S)$. Thus 
\begin{align*}
\frac{f^{\lambda^{\prt{k}}} \binom{k}{|\lambda^{\prt{k}}|}({\sf sv}((\lambda^{\prt{k}})^+)^{k-|\lambda^{\prt{k}}|}}{|{\sf SVT}(\lambda, N,S)|} &\leq \frac{f^{\lambda^{\prt{k}}} \binom{k}{|\lambda^{\prt{k}}|}({\sf sv}((\lambda^{\prt{k}})^+)^{k-|\lambda^{\prt{k}}|}}{f^{\lambda^{\prt{k}}}}\\
&= \binom{k}{|\lambda^{\prt{k}}|}({\sf sv}((\lambda^{\prt{k}})^+)^{k-|\lambda^{\prt{k}}|}\\
&= O((k)^{k-|\lambda^{\prt{k}}|})O(|\lambda^{\prt{k}}|^{k-|\lambda^{\prt{k}}|})\\
&= {\sf poly}(|\lambda^{\prt{k}}|, k)
\end{align*}
completing the proof.
\end{proof}

\section{Approximate counting}
We are now ready to construct a FPRAS for the number of $N$-standard set-valued tableau that contain a fixed $N\prt{k}$-standard set-valued pre-tableau. Let $\lambda$ be a partition, $|\lambda| \leq N \in \mathbb{N}$, and $S$ a $N\prt{k}$-standard set-valued pre-tableau of shape $\lambda$ with $0\leq k \leq N$, error parameter $\epsilon \in (0,1]$ we desire to compute an approximation $A$ such that
\[
P((1 - \epsilon)f^{\lambda, N, S} \leq A \leq (1 + \epsilon)f^{\lambda, N, S}) \geq \frac{3}{4}
\]
in time polynomial in $|\lambda \setminus S|$, $k$, $\frac{1}{\epsilon}$. The confidence parameter of $\frac{3}{4}$ may be boosted to $1 - \delta$ for any $\delta > 0$ by performing $\mathcal{O}(\ln \delta^{-1})$ trials and taking the median result~\cite{JVV86}. By choosing $S = E_{\lambda, N}$ we may approximate $f^{\lambda, N}$.



Let $S_0, \ldots, S_k$ be a sequence such that, for $0 \leq m \leq k$, $S_m$ is a $N\prt{m}$-standard set-valued pre-tableaux and $S_0 \subseteq S_1 \subseteq \cdots \subseteq S_{k-1}, S_k = S$. Such a sequence must exist since $f^{\lambda, N, S} > 0$. Define $SV_m = {\sf SVT}(\lambda, N, S_m)$ for $0 \leq m \leq k$. Then
\begin{equation}
\label{eq:disjointSV}
SV_m = \bigsqcup_{A} {\sf SVT}(\lambda, N, A) 
\end{equation}
where the sum is over all $N\prt{m-1}$-standard set-valued pre-tableau $A$ such that $S_m \subseteq A$.


Then $f^{\lambda, N, S}$ can be computed via the telescoping product
\begin{equation}
\label{eq:flamNeq}
\frac{1}{f^{\lambda, N, S}} = \frac{1}{|SV_k|} = \frac{|SV_{k-1}|}{|SV_k|}\frac{|SV_{k-2}|}{|SV_{k-1}|}\cdots\frac{|SV_{0}|}{|SV_{1}|}.
\end{equation}
Note that $|SV_{0}|=1$ since $1$ may only be placed in the $(1,1)$ block, and $S_0$ will always be a $N$-standard set-valued tableau. Our FPRAS will approximate $f^{\lambda, N, S}$ by approximating the ratios $\frac{|SV_{m-1}|}{|SV_{m}|}$.

An $\mathcal{F} \subseteq \mathcal{SVT}$ is \emph{downwardly stable} if for all $(\lambda, N, S) \in \mathcal{F}$, if $S'$ is a $N\prt{k'}$-standard set-valued pre-tableau of shape $\lambda$ such that $S \subseteq S'$, then $(\lambda, N, S') \in \mathcal{F}$.

\begin{theorem}
\label{thm:downwardstabfrpas}
Let $\mathcal{F}$ be a downwardly stable subset of $\mathcal{SVT}$ such that there is a FPAUS for $\mathcal{F}$. There is a FPRAS that computes $f^{\lambda, N, S}$ for $(\lambda, N, S) \in \mathcal{F}$. 
\end{theorem}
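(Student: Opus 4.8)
This is the Jerrum--Valiant--Vazirani reduction from almost-uniform sampling to approximate counting~\cite{JVV86}, carried out on the essentially self-reducible structure of \eqref{eq:flamNeq} and \eqref{eq:disjointSV}. The plan is to compute $f^{\lambda,N,S} = \prod_{m=1}^{k}\rho_m^{-1}$, where $\rho_m := |SV_{m-1}|/|SV_m| \in (0,1]$ (so $\prod_m \rho_m = 1/|SV_k| = 1/f^{\lambda,N,S}$ and $|SV_0|=1$), by forming a Monte Carlo estimate $\hat\rho_m$ of each $\rho_m$ using the hypothesized FPAUS to sample from (approximately) the uniform distribution on $SV_m$, and returning $A := \prod_{m=1}^{k}\hat\rho_m^{-1}$. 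Two structural points make this work. First, the chain may be taken with $S = S_k \subseteq S_{k-1}\subseteq\cdots\subseteq S_0$ as assignments; then $S \subseteq S_m$ for every $m$, so downward stability of $\mathcal{F}$ places every sub-instance $(\lambda,N,S_m)$ in $\mathcal{F}$ and the FPAUS is available throughout. Second, by \eqref{eq:disjointSV} the set $SV_m$ splits into blocks ${\sf SVT}(\lambda,N,A)$ indexed by the cell into which the value $m$ is inserted; such a cell must be a removable or addable corner of the Young diagram $\lambda\setminus S_m$, so the number of blocks is $O(|\lambda\setminus S_m|) = O(k)$, and a largest block accounts for an $\Omega(1/k)$ fraction of $SV_m$.

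First I would construct the chain \emph{adaptively}, since we cannot compute $|SV_m|$ exactly in order to pre-select the large blocks. Working downward from $S_k = S$: draw ${\sf poly}(k,\epsilon^{-1})$ FPAUS samples from $SV_m$ (with bias parameter $\delta_0$, to be fixed polynomially small), record in each the cell containing the value $m$, and let $c^\star$ be the modal cell. Since $SV_m$ has $O(k)$ blocks and each sample is $\delta_0$-close to uniform, with high probability the true probability that a uniform element of $SV_m$ places $m$ in $c^\star$ is within a constant factor of the maximum, hence $\Omega(1/k)$. Set $S_{m-1} := S_m$ with $m$ placed in $c^\star$; this is a legitimate $N\langle m-1\rangle$-standard set-valued pre-tableau (witnessed by a sampled $T$ with ${\sf cell}_T(m) = c^\star$), ${\sf SVT}(\lambda,N,S_{m-1})$ is the corresponding block of \eqref{eq:disjointSV}, $\rho_m = \Omega(1/k)$, and $(\lambda,N,S_{m-1})\in\mathcal{F}$ by downward stability.

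Next, for each $m$, draw a fresh batch of $N_m = {\sf poly}(k,\epsilon^{-1})$ samples from $SV_m$ and let $\hat\rho_m$ be the fraction lying in $SV_{m-1}$. Each sample being $\delta_0$-close to uniform, $\mathbb{E}[\hat\rho_m]$ differs from $\rho_m$ by at most $\delta_0$, which is negligible against $\rho_m = \Omega(1/k)$ once $\delta_0 = \Theta(\epsilon/k^2)$. Since $\hat\rho_m$ is essentially an average of independent Bernoulli$(\rho_m)$ variables with $\rho_m = \Omega(1/k)$, a Chernoff (or Chebyshev, as in~\cite{JVV86}) estimate shows $N_m = \Theta(k^3\epsilon^{-2}\log k)$ suffices to guarantee $(1-\eta)\rho_m \le \hat\rho_m \le (1+\eta)\rho_m$ with probability at least $1 - 1/(4k)$, where $\eta := \epsilon/(4k)$. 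A union bound over the $k$ stages then yields, with probability at least $3/4$, that $A/f^{\lambda,N,S} = \prod_{m=1}^{k}\rho_m/\hat\rho_m$ lies in $[(1+\eta)^{-k},(1-\eta)^{-k}] \subseteq [1-\epsilon,1+\epsilon]$ for $\epsilon\in(0,1]$. For the running time: there are $k$ stages, each using ${\sf poly}(k,\epsilon^{-1})$ samples, and each FPAUS call on $(\lambda,N,S_m)$ runs in time ${\sf poly}(|\lambda\setminus S_m|,m,\log\delta_0^{-1}) = {\sf poly}(|\lambda\setminus S|,k,\epsilon^{-1})$ because $|\lambda\setminus S_m|\le|\lambda\setminus S|$ and $m\le k$; so the whole scheme is ${\sf poly}(|\lambda\setminus S|,k,\epsilon^{-1})$. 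Boosting the confidence $3/4$ to $1-\delta$ by taking the median of $O(\log\delta^{-1})$ independent runs~\cite{JVV86} gives the claimed FPRAS.

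Everything past the two structural observations is the textbook product-of-ratios analysis, so I expect the main obstacle to be the honest verification of the second one: that the blocks of \eqref{eq:disjointSV} for $SV_m$ are indexed by insertion cells for the value $m$, that these cells are confined to the corners of $\lambda\setminus S_m$, and hence that the branching factor is $O(k)$, so that a largest block --- which we can only locate approximately, via the sampler --- captures an $\Omega(1/k)$ fraction. The accompanying bookkeeping (that the adaptively built chain is a legitimate chain with every term inside the downwardly stable $\mathcal{F}$, and that the FPAUS bias $\delta_0$ together with the $k$-fold error propagation can be absorbed into polynomial factors) is routine.
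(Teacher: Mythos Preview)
Your proposal is correct and follows essentially the same approach as the paper: the adaptive construction of the chain $S_k\supseteq S_{k-1}\supseteq\cdots\supseteq S_0$ by choosing the modal block of \eqref{eq:disjointSV}, the use of downward stability to keep every sub-instance in $\mathcal{F}$, the polynomial bound on the number of blocks (the paper phrases this as $1/|\lambda\setminus S|\le X^{[m]}/s$ rather than your removable/addable-corner count, but the content is the same), and the Chebyshev/Chernoff product-of-ratios analysis with a final median boost all match. The only cosmetic difference is that you draw fresh samples to estimate each $\rho_m$ after building the chain, whereas the paper reuses the same batch for selecting $S_{m-1}$ and for the estimate $X^{[m]}/s$.
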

\begin{proof}
Let $(\lambda, N, S) \in \mathcal{F}$ and set $S_k = S$. We will approximate the ratios from \eqref{eq:flamNeq} for $m = k, k-1, \ldots, 1$, inductively, by sampling almost uniformly at random from $SV_{m}$ using the FPAUS. We run the FPAUS, with bias parameter $\eta = \frac{\epsilon}{20 |\lambda \setminus S| k}$\,, to sample $s$ samples from $SV_{m} = {\sf SVT}(\lambda, N, S_m)$ with distribution $\pi$ such that $d_{\sf TV}(\pi,u) \leq \eta$.
By \eqref{eq:disjointSV}, each sample is contained in a ${\sf SVT}(\lambda, N, A)$ for $A$ a $N\prt{m-1}$-standard set-valued pre-tableau $A$ such that $S_m \subseteq A$. Let $S_{m-1}$ be the $N\prt{m-1}$-standard set-valued pre-tableau such that subset ${\sf SVT}(\lambda, N, S_{m-1})$ contains the largest number of samples. The set $\mathcal{F}$ is downwardly stable, and hence $(\lambda, N, S_{m-1}) \in \mathcal{F}$. Thus, we construct the sequence $S = S_k \subseteq S_{k-1} \subseteq \cdots \subseteq S_{1} \subseteq S_0$ of pre-tableau with $SV_m = {\sf SVT}(\lambda, N, S_m)$ for $0 \leq m \leq k$. If
\[
r_m = \frac{|SV_{m-1}|}{|SV_{m}|},
\]
then, $d_{\sf TV}(\pi,u) \leq \eta$ implies
\begin{equation}
\label{eq:rel1}
r_m - \eta = u(SV_{m-1}) - \eta \leq \pi(SV_{m-1}) \leq u(SV_{m-1}) + \eta = r_m + \eta.
\end{equation}

Our aim is to estimate $r_m$ within a multiplicative factor of $(1 + \epsilon / 4 k)$ with probability $1 - 1 / 4k$. Let $X_i$, for $1 \leq i \leq s$, be a random variable equal to $1$ if sample $i$ is in $SV_{m-1}$, and $0$ otherwise. Let $\sigma^2 = Var(X_1) = \ldots = Var(X_s)$.
Let $X^{[m]} = \sum_{i=1}^s X_i$. Note that the choice of $S_{k-1}$ and \eqref{eq:disjointSV} ensures that $1/|\lambda \setminus S| \leq X^{[m]}/s$. The Chebyshev inequality~\cite[Theorem 4.1.1]{AS06} implies
\[
\mathbb{P}(E_1):=\mathbb{P}(|X^{[m]}/s - \pi(SV_{m-1})| \geq \eta) \leq \frac{\sigma^2}{s \eta^2} \leq \frac{1}{s \eta^2}.
\]
Choosing $s \geq 8k(\frac{20 |\lambda \setminus S| k}{\epsilon})^2$, we have that with probability greater than $1 - \frac{1}{8k}$ the event $\overline{E_1}$ (the complement of event $E_1$) occurs and
\begin{equation}
\label{eq:prob1}
\qquad\qquad \frac{1}{|\lambda \setminus S|} \leq X^{[m]}/s  \leq \,\, \pi(SV_{m-1}) + \eta \leq r_m + 2\eta.  \\
\end{equation}

Applying a Chernoff bound~\cite[Corollary 4.5]{MU05}, with $0 < \eta < 1$, yields
\[
\mathbb{P}(|X^{[m]} - E[X^{[m]}]| \geq \eta E[X^{[m]}]) \leq 2e^{-\eta^2 E[X^{[m]}]/3}.
\]
Given that the event $\overline{E_1}$ occurs and choosing $s \geq \max\{8k(\frac{20 |\lambda \setminus S| k}{\epsilon})^2, 3(\frac{20 |\lambda \setminus S| k}{\epsilon})^2 \frac{|\lambda \setminus S|}{1-\epsilon} \log(16k)\}$ by \eqref{eq:prob1}
\[
2e^{-\eta^2 E[X^{[m]}]/3} \leq 2e^{-\eta^2 s (\frac{1}{|\lambda \setminus S|} - \eta) /3} \leq 2e^{-\eta^2 s (\frac{1-\epsilon}{|\lambda \setminus S|}) /3} \leq \frac{1}{8k}.
\]
Thus the probability of $\overline{E_1}$ and $|X^{[m]} - E[X^{[m]}]| < \eta E[X^{[m]}]$ is greater than $1 - \frac{1}{4k}$. Hence with probability greater than $1 - \frac{1}{4k}$, we have
\begin{align*}
X^{[m]}  / s & \leq \,\, (1+\eta)E[X^{[m]}] / s = (1+\eta)\pi(SV_{m-1}) & \\
& \leq \,\, (1+\eta)(r_m + \eta) & \text{By }\eqref{eq:rel1}. \\
& = \,\, r_m (1 + \eta + \eta / r_m + \eta^2 / r_m) \leq \,\, r_m (1 + \eta + 2\eta / r_m) & \\
& \leq \,\, r_m (1 + \eta + 2\eta / (\frac{1}{|\lambda \setminus S|}-2\eta)) & \text{By }\eqref{eq:prob1}. \\
& \leq \,\, r_m (1 + 5|\lambda \setminus S|\eta) = r_m (1 + \frac{\epsilon}{4k}). & 
\end{align*}
The third inequality above follows from $\eta < 1$ and the final inequality from the fact that $2 \eta \leq \frac{1}{2 |\lambda \setminus S|}$. By a nearly identical argument we have $r_m (1 - \frac{\epsilon}{4k}) \leq X^{[m]} / s$. Now, multiplying $\epsilon$ by a sufficiently small constant if needed (that does not depend on $\epsilon$), we have that $1/(1+\epsilon/4k)^k \geq (1 - \epsilon/4k)^k \geq (1 - \epsilon)$ and $1/(1-\epsilon/4k)^k \leq (1 + \epsilon/2k)^k \leq (1 + \epsilon)$. This, combined with the above arguments and \eqref{eq:flamNeq}, implies that if $A = \prod_{m=1}^k X^{[m]} / s$,
\[
\mathbb{P}((1 - \epsilon)f^{\lambda, N, S} \leq 1/A \leq (1 + \epsilon)f^{\lambda, N, S}) \geq \frac{3}{4}.
\]

The FPAUS for each $SV_m$ is polynomial in $|\lambda \setminus S_m|$, and $m$ by hypothesis and hence are polynomial in $|\lambda \setminus S_k|$, and $k$. A total of $k \cdot \max\{8k(\frac{20 |\lambda \setminus S| k}{\epsilon})^2, 3(\frac{20 |\lambda \setminus S| k}{\epsilon})^2 \frac{|\lambda \setminus S|}{1-\epsilon} \log(8k)\}$ samples are required from the FPAUS. Thus our approximation is computed in time polynomial in $|\lambda \setminus S|$, $k$, $\frac{1}{\epsilon}$.
\end{proof}

\begin{corollary}
There is a FPRAS computing $f^{\lambda, N, S}$ for $\mathcal{F}_{p,q}$.
\end{corollary}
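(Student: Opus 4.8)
The plan is to obtain this corollary as an immediate application of Theorem~\ref{thm:downwardstabfrpas}. That theorem has exactly two hypotheses on a family $\mathcal{F} \subseteq \mathcal{SVT}$: that $\mathcal{F}$ be downwardly stable, and that there exist a FPAUS for $\mathcal{F}$. The second hypothesis, for $\mathcal{F} = \mathcal{F}_{p,q}$, is precisely the conclusion of Theorem~\ref{theorem:main}. So the only thing left to verify is that $\mathcal{F}_{p,q}$ is downwardly stable.

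First I would observe that whether a triple $(\lambda, N, S)$ lies in $\mathcal{F}_{p,q}$ depends only on the shape $\lambda$, through the condition $\lambda \subseteq \mu \cup \lambda^{\circ}$ for some partition $\lambda^{\circ}$ of rank less than three (here $\mu = (p^q)$ is fixed); it imposes no constraint on $N$ or on the pre-tableau $S$. Now suppose $(\lambda, N, S) \in \mathcal{F}_{p,q}$ and $S'$ is a $N\prt{k'}$-standard set-valued pre-tableau of shape $\lambda$ with $S \subseteq S'$. The containment $S \subseteq S'$ forces $k' \leq k$ and leaves $\lambda$ and $N$ unchanged, so $(\lambda, N, S')$ is again a valid member of $\mathcal{SVT}$; and since its shape is still $\lambda$, the defining condition $\lambda \subseteq \mu \cup \lambda^{\circ}$ continues to hold, whence $(\lambda, N, S') \in \mathcal{F}_{p,q}$. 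This is exactly downward stability of $\mathcal{F}_{p,q}$.

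With downward stability established and the FPAUS for $\mathcal{F}_{p,q}$ supplied by Theorem~\ref{theorem:main}, Theorem~\ref{thm:downwardstabfrpas} applies directly and produces a FPRAS computing $f^{\lambda, N, S}$ for every $(\lambda, N, S) \in \mathcal{F}_{p,q}$, as desired. (Specializing to $S = E_{\lambda, N}$, so that $k = N$ and $|\lambda \setminus S| = |\lambda|$, then recovers a FPRAS for $f^{\lambda, N}$ running in time polynomial in $|\lambda|$ and $N$, i.e.\ Theorem~\ref{thm:mainFPRASresult}.)

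There is no genuine obstacle here: all of the analytic work — the mixing-time estimate behind the FPAUS and the telescoping-product argument behind the FPRAS — has already been carried out in Theorems~\ref{theorem:main} and~\ref{thm:downwardstabfrpas}. The only step requiring even a line of care is the elementary bookkeeping check that enlarging the pre-tableau from $S$ to $S'$ keeps the triple inside $\mathcal{SVT}$ with the same $\lambda$ and $N$, which is immediate from the definitions of $\mathcal{SVT}$ and of a $N\prt{k}$-standard set-valued pre-tableau.
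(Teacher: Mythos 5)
Your proposal is correct and follows exactly the paper's route: verify that $\mathcal{F}_{p,q}$ is downwardly stable (which, as you note, is immediate because membership depends only on the shape $\lambda$) and then combine the FPAUS from Theorem~\ref{theorem:main} with Theorem~\ref{thm:downwardstabfrpas}. The paper's own proof is just a terser version of this same argument.
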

\begin{proof}
The subset $\mathcal{F}_{p,q} \subseteq \mathcal{SVT}$ is downwardly stable. Our result now follows from Theorem~\ref{theorem:main} and Theorem~\ref{thm:downwardstabfrpas}.
\end{proof}

\begin{corollary}
There is a FPRAS computing $f^{\lambda, N, S}$ for ${\sf Fix}(|\lambda\setminus S|)$ and ${\sf Fix}(k-|\lambda\setminus S|)$.
\end{corollary}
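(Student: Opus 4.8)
The plan is to follow the template of the preceding corollary verbatim: show that the families ${\sf Fix}(|\lambda\setminus S|)$ and ${\sf Fix}(k-|\lambda\setminus S|)$ are downwardly stable, and then combine the FPAUS supplied by Theorem~\ref{thm:mainfixed} with Theorem~\ref{thm:downwardstabfrpas}. Since Theorem~\ref{thm:mainfixed} already provides that $\mathcal{MC}_{\sf SVT}(\lambda,N,S)$ is a FPAUS for both families, the only work remaining is the stability check, and the whole argument is one paragraph of bookkeeping.

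For ${\sf Fix}(|\lambda\setminus S|)$, I would argue as follows. Suppose $(\lambda, N, S)$ lies in this family, so $|\lambda\setminus S|\le c$ for the fixed bounding constant $c$, and let $S'$ be a $N\prt{k'}$-standard set-valued pre-tableau of shape $\lambda$ with $S\subseteq S'$. Every cell that is empty in $S'$ is empty in $S$ as well, so $\lambda\setminus S'\subseteq\lambda\setminus S$, whence $|\lambda\setminus S'|\le|\lambda\setminus S|\le c$ and $(\lambda,N,S')\in{\sf Fix}(|\lambda\setminus S|)$.

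For ${\sf Fix}(k-|\lambda\setminus S|)$ the same monotonicity is slightly less transparent, and this is the one point that needs a small computation. Passing from $S$ to $S'$ adds $a:=k-k'\ge 0$ entries (the values of $S$ are $k+1,\dots,N$ and those of $S'$ are $k'+1,\dots,N$, and $S\subseteq S'$ forces $k'\le k$), and it converts $b:=|\lambda\setminus S|-|\lambda\setminus S'|\ge 0$ formerly empty cells into nonempty ones; since each such cell receives at least one of the $a$ new entries we have $b\le a$. Therefore
\[
k' - |\lambda\setminus S'| = (k-a) - (|\lambda\setminus S| - b) = \bigl(k-|\lambda\setminus S|\bigr) - (a-b) \le k-|\lambda\setminus S|,
\]
so the bound on $k-|\lambda\setminus S|$ is inherited by $k'-|\lambda\setminus S'|$, giving downward stability.

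With both families shown to be downwardly stable, Theorem~\ref{thm:downwardstabfrpas} applied to the FPAUS of Theorem~\ref{thm:mainfixed} yields the desired FPRAS computing $f^{\lambda,N,S}$. I do not anticipate any genuine obstacle; the only subtlety is the monotonicity of $k-|\lambda\setminus S|$ under enlarging the pre-tableau, which the displayed inequality resolves.
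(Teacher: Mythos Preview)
Your proposal is correct and follows exactly the paper's approach: the paper's proof simply asserts that ${\sf Fix}(|\lambda\setminus S|)$ and ${\sf Fix}(k-|\lambda\setminus S|)$ are downwardly stable and then invokes Theorems~\ref{thm:mainfixed} and~\ref{thm:downwardstabfrpas}. You go further by actually supplying the verification of downward stability (including the small computation for $k-|\lambda\setminus S|$), which the paper leaves implicit; this is a welcome addition rather than a deviation.
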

\begin{proof}
The subsets ${\sf Fix}(|\lambda\setminus S|), {\sf Fix}(k-|\lambda\setminus S|) \subseteq \mathcal{SVT}$ are each downwardly stable. Our result follows from Theorem~\ref{thm:mainfixed} and Theorem~\ref{thm:downwardstabfrpas}.
\end{proof}

We conclude with proofs of our main theorems.

\noindent \textit{Proof of Theorem~\ref{thm:mainFPRASresult}:} The set of $(\lambda,N,E_{\lambda, N})$ where $\lambda$ is a partition such that $\lambda \subseteq \mu \cup \lambda^{\circ}$ and $\lambda^{\circ}$ is a partition of rank less than three is a subset of $\mathcal{F}_{p,q}$. The existence of a FPRAS for $\mathcal{F}_{p,q}$ yields a FPRAS for this subset. \qed

\noindent \textit{Proof of Theorem~\ref{thm:mainFPRASresultfixed}:} This follows by an identical argument to the proof of Theorem~\ref{thm:mainFPRASresult}, applying the existence of a FPRAS for ${\sf Fix}(|\lambda\setminus S|)$ and ${\sf Fix}(k-|\lambda\setminus S|)$.

\section*{Acknowledgements}
We indebted to Alex Yong for suggesting this problem and for many helpful discussions. 
We are also grateful to Alejandro Morales for helpful comments and suggestions.
This research was partially supported by NSF RTG grant DMS 1937241.
RH was partially supported by an AMS Simons Travel grant.

\end{document}